\newtheorem{thm}{Theorem}[section]
\newtheorem{prop}[thm]{Proposition}
\newtheorem{exam}[thm]{Example}
\newtheorem{lem}[thm]{Lemma}
\begin{document}

\title{Real analytic families of harmonic functions in a domain with a small hole}
\author{M.~Dalla Riva \footnote{The research  of M.~Dalla Riva was supported by {\it FEDER} funds through {\it COMPETE}--Operational Programme Factors of Competitiveness (``Programa Operacional Factores de Competitividade'') and by Portuguese funds through the {\it Center for Research and Development in Mathematics and Applications} (University of Aveiro) and the Portuguese Foundation for Science and Technology (``FCT--Funda{\c c}{\~a}o para a Ci\^encia e a Tecnologia''), within project PEst-C/MAT/UI4106/2011 with the COMPETE  number FCOMP-01-0124-FEDER-022690. The research was also supported  by the Portuguese Foundation for Science and Technology (``FCT--Funda{\c c}{\~a}o para a Ci\^encia e a Tecnologia'')  with the research grant SFRH/BPD/64437/2009.}
  \&  P.~Musolino}

\date{}
\maketitle

\noindent
{\bf Abstract:} Let $n\ge 3$. Let $\Omega^i$ and $\Omega^o$ be open bounded connected subsets of $\mathbb{R}^n$ containing the origin.  Let $\epsilon_0>0$ be such that $\Omega^o$ contains the closure of $\epsilon\Omega^i$ for all $\epsilon\in]-\epsilon_0,\epsilon_0[$. Then, for a fixed $\epsilon\in]-\epsilon_0,\epsilon_0[\setminus\{0\}$ we consider a Dirichlet problem for the Laplace operator in the perforated domain $\Omega^o\setminus\epsilon\Omega^i$. We denote by $u_\epsilon$ the corresponding solution. If $p\in\Omega^o$ and $p\neq 0$, then we know that under suitable regularity assumptions there exist $\epsilon_p>0$ and a real analytic operator $U_p$ from $]-\epsilon_p,\epsilon_p[$ to $\mathbb{R}$ such that $u_\epsilon(p)=U_p[\epsilon]$ for all $\epsilon\in]0,\epsilon_p[$. Thus it is natural to ask what happens to the equality $u_\epsilon(p)=U_p[\epsilon]$ for $\epsilon$ negative. We show a general result on continuation properties of some particular real analytic families of harmonic functions in domains with a small hole and we prove that the validity of the equality $u_\epsilon(p)=U_p[\epsilon]$ for $\epsilon$ negative depends on the parity of the dimension $n$.

\vspace{11pt}

\noindent
{\bf Keywords:} singularly perturbed perforated domains, harmonic functions, real analytic continuation in Banach space. 

\noindent
{\bf MSC 2010:} 31B05, 31B10, 35B25, 35C20, 35J25.

\section{Introduction}\label{intro}
We fix once for all
\[
n\in\mathbb{N}\,,\ n\ge 3\,,\qquad\alpha\in]0,1[\,.
\] Here $\mathbb{N}$ denotes the set of natural numbers including $0$. Then we fix two sets $\Omega^i$ and $\Omega^o$ in the $n$-dimensional Euclidean space $\mathbb{R}^n$. The letter `$i$' stands for `inner domain' and the letter `$o$' stands for `outer domain'. We assume that $\Omega^i$ and $\Omega^o$ satisfy the following condition
\begin{eqnarray}\label{e1}
&&\Omega^i\ \mathrm{and}\ \Omega^o\ \mathrm{are\ open\ bounded\ connected\ subsets\ of}\ \mathbb{R}^n\ \mathrm{of}\\
\nonumber
&&\mathrm{class}\ C^{1,\alpha}\ \mathrm{such\ that\ }\mathbb{R}^n\setminus\mathrm{cl}\Omega^i\,\mathrm{and\ }\mathbb{R}^n\setminus\mathrm{cl}\Omega^o\ \mathrm{are\ connected, }\\
\nonumber
&&\mathrm{and\ such\ that\ the\ origin}\ 0\ \mathrm{of}\ \mathbb{R}^n\ \mathrm{belongs\ both\ to}\ \Omega^i\ \mathrm{and}\ \Omega^o\,.
\end{eqnarray} Here $\mathrm{cl}\Omega$ denotes the closure of $\Omega$ for all $\Omega\subseteq\mathbb{R}^n$. For the definition of functions and sets of the usual Schauder class $C^{0,\alpha}$ and $C^{1,\alpha}$, we refer for example to Gilbarg and Trudinger~\cite[\S6.2]{GiTr01}.  We note that condition \eqref{e1} implies that $\Omega^i$ and $\Omega^o$ have no holes and that there exists a real number $\epsilon_0$ such that
\begin{equation}\label{e2}
\epsilon_0>0\ \mathrm{and\ }\ \epsilon\mathrm{cl}\Omega^i\subseteq \Omega^o\ \mathrm{for\ all}\ \epsilon\in]-\epsilon_0,\epsilon_0[\,.
\end{equation}  Then we denote by $\Omega(\epsilon)$ the perforated domain defined by
\[
\Omega(\epsilon)\equiv\Omega^o\setminus(\epsilon\mathrm{cl}\Omega^i)\qquad\quad\forall\epsilon\in]-\epsilon_0,\epsilon_0[\,.
\]
A simple topological argument shows that 
$\Omega(\epsilon)$ is an open bounded connected subset of $\mathbb{R}^n$ of class $C^{1,\alpha}$ for all $\epsilon\in]-\epsilon_0,\epsilon_{0}[\setminus\{0\}$. Moreover,  the boundary  
$\partial \Omega(\epsilon)$ of $\Omega(\epsilon)$ has exactly the two connected components $\partial \Omega^o$ and $\epsilon\partial\Omega^i$, for all 
$\epsilon\in]-\epsilon_0,\epsilon_{0}[$. We also note that $\Omega(0)=\Omega^o\setminus\{0\}$.\par
Now  let  $f^i\in C^{1,\alpha}(\partial\Omega^i)$ and $f^o\in C^{1,\alpha}(\partial\Omega^o)$. Let $\epsilon\in]-\epsilon_0,\epsilon_{0}[\setminus\{0\}$. We consider the following boundary value problem
\begin{equation}\label{dir1}
\left\{
\begin{array}{ll}
\Delta u=0&\text{ in }\Omega(\epsilon)\,,\\
u(x)=f^i(x/\epsilon)&\text{ for }x\in\epsilon\partial\Omega^i\,,\\
u(x)=f^o(x)&\text{ for }x\in\partial\Omega^o\,.
\end{array}
\right.
\end{equation}
As is well known, the problem in \eqref{dir1} has a unique solution in $C^{1,\alpha}(\mathrm{cl}\Omega(\epsilon))$. We denote such a solution by $u_\epsilon$.  Then we fix a point $p$ in $\Omega^o\setminus\{0\}$ and we take $\epsilon_p\in]0,\epsilon_0[$ such that $p\in\Omega(\epsilon)$ for all $\epsilon\in]0,\epsilon_p[$. In particular, it makes sense to consider $u_\epsilon(p)$ for all $\epsilon\in]0,\epsilon_p[$. Thus we can ask the following question.
\[
\text{What can be said of the map from $]0,\epsilon_p[$ to $\mathbb{R}$ which takes $\epsilon$ to $u_\epsilon(p)$?}
\] Questions of this type have been largely investigated by the so called Asymptotic Analysis. We mention here as an example the work of Maz'ya, Nazarov, and Plamenevskij in \cite{MaNaPl00}. The techniques of Asymptotic Analysis aim at representing the behavior of $u_\epsilon(p)$ as $\epsilon\to 0^+$ in terms of regular functions of $\epsilon$ plus a remainder which is smaller than a known infinitesimal function of $\epsilon$.  Instead, by the different approach proposed by Lanza de Cristoforis (cf.~\textit{e.g.}, Lanza de Cristoforis \cite{La02}) and by possibly shrinking $\epsilon_p$, we can represent the function which takes $\epsilon$ to $u_\epsilon(p)$ as the restriction to $]0,\epsilon_p[$ of a real analytic map defined on $]-\epsilon_p,\epsilon_p[$ (for the definition and properties of real analytic maps in Banach space we refer, {\it e.g.}, to Deimling~\cite[\S15]{De85}.)  Moreover, we can consider what we call the `macroscopic' behaviour of the family $\{u_\epsilon\}_{\epsilon\in]0,\epsilon_0[}$. Indeed, if $\Omega_M\subseteq\Omega^o$ is open, and  $0\notin\mathrm{cl}\Omega_M$, and $\epsilon_M\in]0,\epsilon_0]$ is such that $\mathrm{cl}\Omega_M\cap(\epsilon\mathrm{cl}\Omega^i)=\emptyset$ for all $\epsilon\in]-\epsilon_M,\epsilon_M[$, then $\mathrm{cl}\Omega_M\subseteq\mathrm{cl}\Omega(\epsilon)$ for all $\epsilon\in]0,\epsilon_M[$. Thus it makes sense to consider the restriction $u_{\epsilon|\mathrm{cl}\Omega_M}$ for all $\epsilon\in]0,\epsilon_M[$. In particular, it makes sense to consider the map from $]0,\epsilon_M[$ to $C^{1,\alpha}(\mathrm{cl}\Omega_M)$ which takes $\epsilon$ to $u_{\epsilon|\mathrm{cl}\Omega_M}$.  Then we prove in  Proposition~\ref{right} that there exists a real number $\epsilon_1\in]0,\epsilon_0]$ such that the following statement holds (see also  Lanza de Cristoforis~\cite[Thm.~5.3]{La08}.)
\begin{itemize}
\item[(a1)] Let $\Omega_M\subseteq\Omega^o$ be open and such that $0\notin\mathrm{cl}\Omega_M$. Let $\epsilon_M\in]0,\epsilon_1]$ be such that $\mathrm{cl}\Omega_M\cap(\epsilon\mathrm{cl}\Omega^i)=\emptyset$ for all $\epsilon\in]-\epsilon_M,\epsilon_M[$. Then there exists a real analytic operator $U_M$ from $]-\epsilon_M,\epsilon_M[$ to $C^{1,\alpha}(\mathrm{cl}\Omega_M)$ such that 
\begin{equation}\label{uUM}
u_{\epsilon|\mathrm{cl}\Omega_M}=U_M[\epsilon]\qquad\forall\epsilon\in]0,\epsilon_M[\,.
\end{equation}
\end{itemize} Here the letter `$M$' stands for `macroscopic'.  But we can also consider the `microscopic' behavior of the family $\{u_\epsilon\}_{\epsilon\in]0,\epsilon_0[}$ in proximity of the boundary of the hole. To do so we denote by $u_\epsilon(\epsilon\,\cdot\,)$ the rescaled function which takes $x\in(1/\epsilon)\mathrm{cl}\Omega(\epsilon)$ to $u_\epsilon(\epsilon x)$, for all $\epsilon\in]0,\epsilon_0[$. If $\Omega_m\subseteq\mathbb{R}^n\setminus\mathrm{cl}\Omega^i$ is open, and $\epsilon_m\in]0,\epsilon_0]$ is such that $\epsilon\mathrm{cl}\Omega_m\subseteq\Omega^o$ for all $\epsilon\in]-\epsilon_m,\epsilon_m[$, then $\mathrm{cl}\Omega_m\subseteq(1/\epsilon)\mathrm{cl}\Omega(\epsilon)$ for all $\epsilon\in]0,\epsilon_m[$ and it makes sense to consider the map from $]0,\epsilon_m[$ to $C^{1,\alpha}(\mathrm{cl}\Omega_m)$ which takes $\epsilon$ to $u_\epsilon(\epsilon\,\cdot\,)_{|\mathrm{cl}\Omega_m}$.  In  Proposition~\ref{right} we prove that there exists $\epsilon_1\in]0,\epsilon_0]$ such that the following statement holds.
\begin{itemize}
\item[(a2)] Let $\Omega_m\subseteq \mathbb{R}^n\setminus\mathrm{cl}\Omega^i$ be open and bounded. Let $\epsilon_m\in]0,\epsilon_1]$ be such that $\epsilon\mathrm{cl}\Omega_m\subseteq\Omega^o$ for all $\epsilon\in]-\epsilon_m,\epsilon_m[$. Then there exists a real analytic operator $U_m$ from $]-\epsilon_m,\epsilon_m[$ to $C^{1,\alpha}(\mathrm{cl}\Omega_m)$ such that
\begin{equation}\label{uUm}
u_{\epsilon}(\epsilon\,\cdot\,)_{|\mathrm{cl}\Omega_m}=U_m[\epsilon]\qquad\forall\epsilon\in]0,\epsilon_m[\,. 
\end{equation}
\end{itemize}   Here the letter `$m$' stands for `microscopic'.  \par
We now observe that Proposition~\ref{right} states that the equalities in \eqref{uUM} and \eqref{uUm} hold in general only for $\epsilon$ positive, but the functions $u_{\epsilon|\mathrm{cl}\Omega_M}$, $U_M[\epsilon]$, $u_\epsilon(\epsilon\,\cdot\,)_{|\mathrm{cl}\Omega_m}$  and $U_m[\epsilon]$ are defined also for $\epsilon$ negative. Thus, it is natural to formulate the following question.
\begin{equation}\label{?}
\text{What happens to the equalities in \eqref{uUM} and \eqref{uUm} for $\epsilon$ negative?}
\end{equation} 

The purpose of this paper is to answer to the question formulated here above. In particular, 
we prove in Theorem~\ref{even} that the equalities in \eqref{uUM} and \eqref{uUm} hold also for $\epsilon$ negative if the dimension $n$ is even. Instead, if the dimension $n$ is odd  we show in Proposition~\ref{const} that the equalities in \eqref{uUM} and \eqref{uUm}  hold for $\epsilon$ negative only if there exists a real constant $c$ such that $f^i=c$ and $f^o=c$ identically (so that $u_\epsilon(x)=c$ for all $x\in\mathrm{cl}\Omega(\epsilon)$ and $\epsilon\in]-\epsilon_0,\epsilon_0[\setminus\{0\}$.)\par However, we note that the conditions expressed in (a1) and (a2) are not related to the particular boundary value problem in \eqref{dir1}. Indeed, we could prove the validity of (a1) and (a2) for families of functions $\{u_\epsilon\}_{\epsilon\in]0,\epsilon_1[}$ which are solutions of problems with different boundary conditions, such as those considered in Lanza de Cristoforis~\cite{ La08, Lan07a, La10}. For this reason, we investigate the properties of families of functions $\{u_\epsilon\}_{\epsilon\in]0,\epsilon_1[}$ such that
\begin{itemize}
\item[(a0)] $u_\epsilon\in C^{1,\alpha}(\mathrm{cl}\Omega(\epsilon))$ and $\Delta u_{\epsilon}=0$ in $\Omega(\epsilon)$ for all $\epsilon\in]0,\epsilon_1[$
\end{itemize} and which satisfy the conditions in (a1) and (a2), but which are not required to satisfy any specific boundary condition on $\partial\Omega(\epsilon)$. To do so, we introduce the following terminology.\par
Let $\epsilon_1\in]0,\epsilon_0]$. We say that $\{u_\epsilon\}_{\epsilon\in]0,\epsilon_1[}$ is a {\em right real analytic family of harmonic functions on $\Omega(\epsilon)$}  if it satisfies the conditions in (a0), (a1), (a2). We say that $\{v_\epsilon\}_{\epsilon\in]-\epsilon_1,\epsilon_1[}$ is a  {\em real analytic family of harmonic functions on $\Omega(\epsilon)$} if it satisfies the following conditions (b0)--(b2).
\begin{enumerate}
\item[(b0)] $v_0\in  C^{1,\alpha}(\mathrm{cl}\Omega^o)$ and $\Delta v_0=0$ in $\Omega^o$, $v_\epsilon\in C^{1,\alpha}(\mathrm{cl}\Omega(\epsilon))$ and $\Delta v_{\epsilon}=0$ in $\Omega(\epsilon)$ for all $\epsilon\in]-\epsilon_1,\epsilon_1[\setminus\{0\}$.
\item[(b1)] Let $\Omega_M\subseteq\Omega^o$ be open and such that $0\notin\mathrm{cl}\Omega_M$. Let $\epsilon_M\in]0,\epsilon_1]$ be such that  $\mathrm{cl}\Omega_M\cap\epsilon\mathrm{cl}\Omega^i=\emptyset$ for all $\epsilon\in]-\epsilon_M,\epsilon_M[$. Then there exists a real analytic operator $V_M$ from $]-\epsilon_M,\epsilon_M[$ to $C^{1,\alpha}(\mathrm{cl}\Omega_M)$ such that
\begin{equation*}
v_{\epsilon|\mathrm{cl}\Omega_M}=V_M[\epsilon]\qquad\forall\epsilon\in]-\epsilon_M,\epsilon_M[\,. 
\end{equation*}
\item[(b2)] Let $\Omega_m\subseteq \mathbb{R}^n\setminus\mathrm{cl}\Omega^i$ be an open and bounded subset. Let $\epsilon_m\in]0,\epsilon_1]$ be such that $\epsilon\mathrm{cl}\Omega_m\subseteq\Omega^o$ for all $\epsilon\in]-\epsilon_m,\epsilon_m[$. Then there exists a real analytic operator $V_m$ from $]-\epsilon_m,\epsilon_m[$ to $C^{1,\alpha}(\mathrm{cl}\Omega_m)$ such that
\begin{equation}\label{vVm}
v_{\epsilon}(\epsilon\,\cdot\,)_{|\mathrm{cl}\Omega_m}=V_m[\epsilon]\qquad\forall\epsilon\in]-\epsilon_m,\epsilon_m[\setminus \{0\}\,. 
\end{equation}
\end{enumerate}
Here $v_\epsilon(\epsilon\,\cdot\,)$ denotes the map which takes $x\in(1/\epsilon)\mathrm{cl}\Omega(\epsilon)$ to $v_\epsilon(\epsilon x)$, for all $\epsilon\in]-\epsilon_1,\epsilon_1[\setminus\{0\}$. We also note that we do not ask in condition (b2) that the equality in \eqref{vVm} holds for $\epsilon=0$. In particular, $v_0(0\,\cdot\,)_{|\mathrm{cl}\Omega_m}$ is necessarily a constant function on $\mathrm{cl}\Omega_m$, while $V_m[0]$ may be non-constant. Finally, we say that  $\{w_\epsilon\}_{\epsilon\in]-\epsilon_1,\epsilon_1[}$ is a  {\em real analytic family of harmonic functions on $\Omega^o$}  if it satisfies the following conditions (c0), (c1).
\begin{enumerate}
\item[(c0)] $w_\epsilon\in C^{1,\alpha}(\mathrm{cl}\Omega^o)$ and $\Delta w_{\epsilon}=0$ in $\Omega^o$ for all $\epsilon\in]-\epsilon_1,\epsilon_1[$.
\item[(c1)] The map from $]-\epsilon_1,\epsilon_1[$ to $C^{1,\alpha}(\mathrm{cl}\Omega^o)$ which takes $\epsilon$ to $w_\epsilon$ is real analytic. 
\end{enumerate}

We state our main results in Theorems 3.1 and 3.2, where we consider separately the case of dimension  $n$ even and of dimension $n$  odd, respectively. In particular, by Theorems 3.1 and 3.2 we can deduce the validity of the following statements (j) and (jj).
\begin{enumerate}
\item[(j)] If the dimension $n$ is even and  $\{u_\epsilon\}_{\epsilon\in]0,\epsilon_1[}$ is a {\em right real analytic family of harmonic functions on} $\Omega(\epsilon)$, then there exists a {\em real analytic family of harmonic functions} $\{v_\epsilon\}_{\epsilon\in]-\epsilon_1,\epsilon_1[}$ on $\Omega(\epsilon)$ such that $u_\epsilon=v_\epsilon$ for all $\epsilon\in]0,\epsilon_1[$.
\item[(jj)] If the dimension $n$ is odd and   $\{v_\epsilon\}_{\epsilon\in]-\epsilon_1,\epsilon_1[}$ is a {\em real analytic family of harmonic functions on} $\Omega(\epsilon)$, then there exists a {\em real analytic family of harmonic functions} $\{w_\epsilon\}_{\epsilon\in]-\epsilon_1,\epsilon_1[}$ {\em on} $\Omega^o$ such that $v_\epsilon=w_{\epsilon|\mathrm{cl}\Omega(\epsilon)}$ for all $\epsilon\in]-\epsilon_1,\epsilon_1[$.
\end{enumerate}
In particular we note that for $n$ odd statement (jj) implies that for each $\epsilon \in ]-\epsilon_1,\epsilon_1[$ the function $v_\epsilon$ can be extended inside the hole $\epsilon \Omega^i$ to an harmonic function defined on the whole of $\Omega^o$. As is well known, the condition of existence of an extension of a
harmonic function defined on
$\Omega(\epsilon)$ to $\Omega$ is quite restrictive. Hence, case (jj)
has to be considered, in a sense, as exceptional. 
\par 
The paper is organized as follows. Section~\ref{pre} is a section of preliminaries where we introduce some known results of Potential Theory. In particular, we adopt the approach proposed by Lanza de Cristoforis for the analysis of elliptic boundary value problems in domains with a small hole. Accordingly, we show that the boundary value problem in \eqref{dir1} is equivalent to a suitable functional equation $\Lambda=0$, where $\Lambda$ is a real analytic operator between Banach spaces.   Then we analyze equation $\Lambda=0$ by exploiting the Implicit Function Theorem for real analytic functions (cf.~{\it e.g.}, Deimling~\cite[Theorem~15.3]{De85}.)  In Section~\ref{main} we prove our main Theorems~\ref{even} and \ref{odd}, where we consider separately case $n$ even and $n$ odd, respectively. Then in Examples~\ref{ex1}, \ref{ex2} and \ref{ex3} we show that the the assumptions in Theorems~\ref{even} and \ref{odd} cannot be weakened in a sense which we clarify below. In particular, by Examples~\ref{ex2} and \ref{ex3} we deduce that analogs of statements (j) and (jj) do not hold if we replace the assumption that $u_\epsilon$, $v_\epsilon$, $w_\epsilon$ are harmonic with the weaker assumption that $u_\epsilon$, $v_\epsilon$, $w_\epsilon$  are real analytic.    In the last Section~\ref{appl} we consider some particular cases and we show some applications of Theorems~\ref{even} and \ref{odd}.  In Proposition~\ref{right} we consider the family $\{u_\epsilon\}_{\epsilon\in]0,\epsilon_0[}$ of the solutions in $C^{1,\alpha}(\mathrm{cl}\Omega(\epsilon))$ of \eqref{dir1}. We show that there exists $\epsilon_1\in]0,\epsilon_0]$ such that $\{u_\epsilon\}_{\epsilon\in]0,\epsilon_0[}$ satisfies the conditions in (a1) and (a2).  We also prove that we can take $\epsilon_1=\epsilon_0$ if the dimension $n$ is even.  In Proposition~\ref{sym} we assume that $n$ is even  and we consider a right real analytic  family $\{u_\epsilon\}_{\epsilon\in]0,\epsilon_1[}$ of harmonic function  on $\Omega(\epsilon)$. Then, conditions (a1) and (a2) imply that  $u_{\epsilon|\mathrm{cl}\Omega_M}$ and $u_{\epsilon}(\epsilon\,\cdot\,)_{|\mathrm{cl}\Omega_m}$ can be represented  by means of convergent power series of $\epsilon$ for $\epsilon$ small and positive. Under the condition that either $\Omega^i=-\Omega^i$ or $\Omega^o=-\Omega^o$ and that $\{u_\epsilon\}_{\epsilon\in]0,\epsilon_1[}$ satisfies some suitable symmetry assumptions, we obtain some additional information on the power series expansion of  $u_{\epsilon|\mathrm{cl}\Omega_M}$ and $u_{\epsilon}(\epsilon\,\cdot\,)_{|\mathrm{cl}\Omega_m}$ for $\epsilon$ small and positive. Finally, in Proposition~\ref{const} we  assume that $n$ is odd and we answer to the question in \eqref{?} by exploiting Theorem~\ref{odd}.

\section{Preliminaries}\label{pre}

We denote by $S_n$ the function from $\mathbb{R}^n\setminus\{0\}$ to $\mathbb{R}$ defined by
\[
S_n(x)\equiv\frac{|x|^{2-n}}{(2-n)s_n}\qquad\forall x\in\mathbb{R}^n\setminus\{0\}\,.
\] Here $s_n$ denotes the $(n-1)$ dimensional measure of the unit sphere in $\mathbb{R}^n$. As is well known $S_n$ is the fundamental solution of the Laplace operator in $\mathbb{R}^n$.
Let $\Omega$ be an open bounded subset of $\mathbb{R}^n$ of class $C^{1,\alpha}$. Let $\mu\in C^{0,\alpha}(\partial\Omega)$. Then we denote by $v[\mu]$ the single layer potential of density $\mu$. Namely $v[\mu]$ is the function from $\mathbb{R}^n$ to $\mathbb{R}$ defined by
\[
v[\mu](x)\equiv\int_{\partial\Omega}S_n(x-y)\mu(y)\, d\sigma_y\qquad\forall x\in\mathbb{R}^n\,.
\]

Then we have the following well known Lemma, whose proof is based on classical results of Potential Theory (see also Miranda \cite[Theorem~5.I]{Mi65}.)

\begin{lem}\label{homeo}
Let $\Omega$ be an open bounded subset of $\mathbb{R}^n$ of class $C^{1,\alpha}$. Let $\tilde\Omega$ be an open bounded subset of $\mathbb{R}^n\setminus\mathrm{cl}\Omega$. Then the map from $C^{0,\alpha}(\partial \Omega)$ to $C^{1,\alpha}(\mathrm{cl}\Omega)$ which takes $\mu$ to $v[\mu]_{|\mathrm{cl}\Omega}$ is linear and continuous, and the map from $C^{0,\alpha}(\partial \Omega)$ to $C^{1,\alpha}(\mathrm{cl}\tilde\Omega)$ which takes $\mu$ to $v[\mu]_{|\mathrm{cl}\tilde\Omega}$ is linear and continuous. Moreover, the map from $C^{0,\alpha}(\partial\Omega)$ to $C^{1,\alpha}(\partial\Omega)$ which takes $\mu$ to $v[\mu]_{|\partial\Omega}$ is a linear homeomorphism. 
\end{lem}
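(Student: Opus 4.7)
The plan is to treat the three assertions separately; linearity of all three maps is immediate from the linearity of $\mu\mapsto v[\mu]$ in $\mu$, so the real content is continuity plus the bijectivity of the boundary-trace map. I would first dispatch continuity via the classical Hölder regularity theorem for single layer potentials on $C^{1,\alpha}$ boundaries, which is precisely the statement of Miranda \cite[Thm.~5.I]{Mi65} cited in the excerpt: it asserts that the extension of $v[\mu]$ to either side of $\partial\Omega$ lies in the corresponding local $C^{1,\alpha}$ class, with norm controlled by $\|\mu\|_{C^{0,\alpha}(\partial\Omega)}$. Applied to the interior this gives boundedness of $\mu\mapsto v[\mu]_{|\mathrm{cl}\Omega}$; applied to the exterior, together with the boundedness of $\tilde\Omega$, it yields the analogous bound on $\mathrm{cl}\tilde\Omega$ (in the subcase $\mathrm{cl}\tilde\Omega\cap\mathrm{cl}\Omega=\emptyset$ the bound is in fact trivial because the kernel $S_n(x-y)$ is smooth on $\mathrm{cl}\tilde\Omega\times\partial\Omega$). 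Continuity of $\mu\mapsto v[\mu]_{|\partial\Omega}$ then comes for free by composing the interior estimate with the continuous trace $C^{1,\alpha}(\mathrm{cl}\Omega)\to C^{1,\alpha}(\partial\Omega)$.

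It remains to establish the bijectivity of $V:\mu\mapsto v[\mu]_{|\partial\Omega}$, after which the open mapping theorem will upgrade it to a homeomorphism. For injectivity, suppose $V\mu=0$. Then $v[\mu]$ is harmonic in $\Omega$ and vanishes on $\partial\Omega$, so $v[\mu]\equiv 0$ on $\Omega$ by the maximum principle; it is also harmonic on $\mathbb{R}^n\setminus\mathrm{cl}\Omega$, vanishes on $\partial\Omega$, and tends to zero at infinity because $n\geq 3$ forces $|S_n(x)|\to 0$ as $|x|\to\infty$. The maximum principle applied to the exterior problem, valid thanks to this decay, gives $v[\mu]\equiv 0$ on the whole of $\mathbb{R}^n$, and the classical jump relation for the normal derivative of the single layer then forces $\mu=0$.

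The main obstacle is surjectivity, and the cleanest route I would take is constructive. Given $g\in C^{1,\alpha}(\partial\Omega)$, I would solve the interior Dirichlet problem in $\Omega$ and the exterior Dirichlet problem in $\mathbb{R}^n\setminus\mathrm{cl}\Omega$ (well-posed in the class of harmonic functions decaying at infinity precisely because $n\geq 3$) with common boundary datum $g$, producing harmonic functions $u^+$ in $\Omega$ and $u^-$ in $\mathbb{R}^n\setminus\mathrm{cl}\Omega$, each of class $C^{1,\alpha}$ up to $\partial\Omega$ by Schauder regularity on $C^{1,\alpha}$ domains. With the appropriate sign convention, the density $\mu\equiv\partial_\nu u^--\partial_\nu u^+$ lies in $C^{0,\alpha}(\partial\Omega)$, and the function equal to $u^\pm$ on the two sides of $\partial\Omega$ is continuous across $\partial\Omega$, harmonic off $\partial\Omega$, decays at infinity, and has normal-derivative jump equal to $\mu$; by the uniqueness characterisation of single layer potentials with these properties it must coincide with $v[\mu]$, so $V\mu=g$. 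The only non-trivial verification in this construction is that the Dirichlet solutions $u^{\pm}$ have normal derivatives in $C^{0,\alpha}(\partial\Omega)$, which is exactly what the $C^{1,\alpha}$ regularity assumption on $\partial\Omega$ is designed to deliver.
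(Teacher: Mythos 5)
Your argument is correct. The paper offers no proof of this lemma beyond the remark that it is ``well known'' together with the citation of Miranda's Theorem~5.I, so there is nothing to compare line by line; what you have written is a sound reconstruction of the classical argument, and your treatment of continuity and of injectivity (interior and exterior maximum principles, decay at infinity for $n\ge 3$, and the Neumann jump relation) is exactly the standard one.

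The one point worth a comparison is your route to surjectivity. The classical proof behind Miranda's statement typically proceeds via Fredholm theory for the second--kind boundary integral equations associated with the normal derivative of the single layer potential, whereas you construct the density directly as the jump $\partial_\nu u^- - \partial_\nu u^+$ of the normal derivatives of the interior and exterior Dirichlet solutions with common datum $g$. This ``direct'' construction is a perfectly legitimate alternative, but two ingredients should be made explicit to keep it airtight. First, to avoid circularity you must obtain the $C^{1,\alpha}(\mathrm{cl}\Omega)$ solvability and regularity of the Dirichlet problem on a $C^{1,\alpha}$ domain with $C^{1,\alpha}$ data from a source independent of layer potentials --- e.g.\ the global Schauder-type regularity of Gilbarg--Trudinger, Theorem~8.34, and its exterior analogue via the Kelvin transform --- since the layer-potential proof of that solvability presupposes the very lemma being proved. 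Second, your appeal to ``the uniqueness characterisation of single layer potentials'' hides a small removability step: the difference between your glued function and $v[\mu]$ is continuous across $\partial\Omega$, harmonic off $\partial\Omega$, and has vanishing Neumann jump, hence is harmonic on all of $\mathbb{R}^n$ (integrate against test functions and use the jump relations), and then vanishes by Liouville together with the decay at infinity. With those two points spelled out, the proof is complete and buys you a self-contained argument in place of the paper's citation.
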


We observe that the last sentence of Lemma~\ref{homeo} holds only if the dimension $n$ is greater or equal than $3$.
Indeed, in the planar case the map which takes $\mu$ to $v[\mu]_{|\partial\Omega}$ is not in general an homeomorphism from $C^{0,\alpha}(\partial\Omega)$ to  $C^{1,\alpha}(\partial\Omega)$ (see {\it e.g.}~Lanza de Cristoforis~\cite[pp.~949--950]{Lan07a}.)  In this paper have assumed that $n\ge 3$ and  thus we can exploit  Lemma~\ref{homeo} to convert a Dirichlet boundary value problem for the Laplace operator into a system of integral equations. In order to study the integral equations corresponding to the Dirichlet problem in the perforated domain $\Omega(\epsilon)$, with $\epsilon\in]-\epsilon_0,\epsilon_0[\setminus\{0\}$, we  now introduce the operators $\Lambda_1$ and $\Lambda_{-1}$. Let $\theta \in \{-1,1\}$. Then we denote by $\Lambda_\theta\equiv(\Lambda^i_\theta,\Lambda^o_\theta)$ the operator from $]-\epsilon_0,\epsilon_0[\times C^{1,\alpha}(\partial\Omega^i)\times C^{1,\alpha}(\partial\Omega^o)\times C^{0,\alpha}(\partial\Omega^i)\times C^{0,\alpha}(\partial\Omega^o)$ to $C^{1,\alpha}(\partial\Omega^i)\times C^{1,\alpha}(\partial\Omega^o)$ defined by 
\begin{eqnarray*}
\lefteqn{\Lambda^i_\theta[\epsilon,f^i,f^o,\mu^i,\mu^o](x)\equiv \theta\int_{\partial\Omega^i}S_n(x - y)\mu^i(y)\,d\sigma_y}\\
&&
\qquad\qquad\qquad\qquad
+\int_{\partial\Omega^o}S_n(\epsilon x - y)\mu^o(y)\,d\sigma_y-f^i(x)\quad\forall x\in\partial\Omega^i\,,\\
\lefteqn{\Lambda^o_\theta[\epsilon,f^i,f^o,\mu^i,\mu^o](x)\equiv\epsilon^{n-2}\int_{\partial\Omega^i}S_n(x - \epsilon y)\mu^i(y)\,d\sigma_y}\\
&&
\qquad\qquad\qquad\qquad
+\int_{\partial\Omega^o}S_n(x - y)\mu^o(y)\,d\sigma_y-f^o(x)\quad\forall x\in\partial\Omega^o
\end{eqnarray*} for all $(\epsilon,f^i,f^o,\mu^i,\mu^o)\in ]-\epsilon_0,\epsilon_0[\times C^{1,\alpha}(\partial\Omega^i)\times C^{1,\alpha}(\partial\Omega^o)\times C^{0,\alpha}(\partial\Omega^i)\times C^{0,\alpha}(\partial\Omega^o)$. Then, by Lemma~\ref{homeo} we deduce the validity of the following Proposition~\ref{L}.

\begin{prop}\label{L}
Let $\Omega^i$, $\Omega^o$ be as in \eqref{e1}. Let $\epsilon_0$ be as in \eqref{e2}. Let $\epsilon\in]-\epsilon_0,\epsilon_0[\setminus\{0\}$.  Let $(f^i, f^o)\in C^{1,\alpha}(\partial\Omega^i)\times C^{1,\alpha}(\partial\Omega^o)$.  Let $\theta\equiv(\mathrm{sgn}\, \epsilon)^n$. Then there exists a unique pair of functions $(\mu^i, \mu^o)\in C^{0,\alpha}(\partial\Omega^i)\times C^{0,\alpha}(\partial\Omega^o)$ such that 
\begin{equation}\label{L=0}
\Lambda_\theta[\epsilon,f^i,f^o,\mu^i,\mu^o]=(0,0)\,.
\end{equation} Moreover, the function $u$ from $\mathrm{cl}\Omega(\epsilon)$ to $\mathbb{R}$ defined by
\[
u(x)\equiv \epsilon^{n-2}\int_{\partial\Omega^i}S_n(x - \epsilon y)\mu^i(y)\,d\sigma_y+\int_{\partial\Omega^o}S_n(x - y)\mu^o(y)\,d\sigma_y\quad \forall x\in\mathrm{cl}\Omega(\epsilon)
\] is the unique solution in $C^{1,\alpha}(\mathrm{cl}\Omega(\epsilon))$ of the boundary value problem in \eqref{dir1}. 
\end{prop}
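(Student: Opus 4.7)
The plan is to show that the system $\Lambda_\theta=0$ is equivalent to the Dirichlet problem \eqref{dir1} via the single-layer ansatz displayed in the statement, and then to solve the system by viewing it as a compact perturbation of the isomorphism furnished by Lemma~\ref{homeo} and invoking the Fredholm alternative together with a uniqueness argument that uses harmonic extension beyond both interfaces.

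First I would verify the equivalence. The function $u$ displayed in the statement is the sum of a single layer potential with density $\mu^o$ on $\partial\Omega^o$ and a rescaled single layer potential on $\epsilon\partial\Omega^i$; by Lemma~\ref{homeo} applied separately to each term, $u\in C^{1,\alpha}(\mathrm{cl}\Omega(\epsilon))$, and since the boundaries carrying the densities lie outside $\Omega(\epsilon)$, $u$ is automatically harmonic on $\Omega(\epsilon)$. The condition $u=f^o$ on $\partial\Omega^o$ reads by inspection $\Lambda^o_\theta=0$. For the condition $u(x)=f^i(x/\epsilon)$ on $\epsilon\partial\Omega^i$, the substitution $x=\epsilon\xi$ with $\xi\in\partial\Omega^i$ together with the homogeneity relation $S_n(\epsilon\xi-\epsilon y)=|\epsilon|^{2-n}S_n(\xi-y)$ produces the rescaling factor $\epsilon^{n-2}|\epsilon|^{2-n}=(\mathrm{sgn}\,\epsilon)^{n-2}=(\mathrm{sgn}\,\epsilon)^n=\theta$, so the condition reduces precisely to $\Lambda^i_\theta=0$. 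This explains the sign choice in the definition of $\Lambda_\theta$. Classical $C^{1,\alpha}$ well-posedness of \eqref{dir1} then takes care of the uniqueness of $u$.

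Next I would decompose $\Lambda_\theta[\epsilon,f^i,f^o,\,\cdot\,,\,\cdot\,]=L_\theta+K_\epsilon-(f^i,f^o)$, where
\[
L_\theta(\mu^i,\mu^o) \equiv \bigl(\theta\, v[\mu^i]_{|\partial\Omega^i},\; v[\mu^o]_{|\partial\Omega^o}\bigr)
\]
is an isomorphism from $C^{0,\alpha}(\partial\Omega^i)\times C^{0,\alpha}(\partial\Omega^o)$ onto $C^{1,\alpha}(\partial\Omega^i)\times C^{1,\alpha}(\partial\Omega^o)$ by the last assertion of Lemma~\ref{homeo}, and $K_\epsilon$ collects the two off-diagonal integrals. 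Since $\epsilon\neq 0$, condition \eqref{e2} ensures that $\epsilon\partial\Omega^i$ and $\partial\Omega^o$ are disjoint, so the kernels appearing in $K_\epsilon$ are smooth; hence $K_\epsilon$ factors boundedly through $C^2$ and is therefore compact. By the Fredholm alternative, proving existence and uniqueness of a solution of $\Lambda_\theta=0$ for every $(f^i,f^o)$ reduces to showing that the kernel of $L_\theta+K_\epsilon$ is trivial.

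The kernel triviality, which I expect to be the main obstacle, would proceed by extending the associated $u$ from the statement to all of $\mathbb{R}^n$. If $\Lambda_\theta[\epsilon,0,0,\mu^i,\mu^o]=0$, then by the equivalence just established $u$ solves the homogeneous Dirichlet problem on $\Omega(\epsilon)$, so $u=0$ on $\mathrm{cl}\Omega(\epsilon)$. Each of the two single layer potentials is harmonic in $\mathbb{R}^n$ off the surface carrying its density, so $u$ is moreover harmonic on $\epsilon\Omega^i$ and on the connected open set $\mathbb{R}^n\setminus\mathrm{cl}\Omega^o$; it is continuous across both interfaces, vanishes on $\partial\Omega^o$ and on $\epsilon\partial\Omega^i$, and, because $n\ge 3$, decays like $|x|^{2-n}$ at infinity. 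Maximum principle arguments in each of the two remaining regions force $u\equiv 0$ on $\mathbb{R}^n$, and the standard jump relations for the normal derivative of the single layer then yield $\mu^o=0$ on $\partial\Omega^o$ and $\mu^i=0$ on $\partial\Omega^i$, as required. The absorption of $(\mathrm{sgn}\,\epsilon)^n$ into the definition of $\Lambda_\theta$ makes this argument uniform in the sign of $\epsilon$, so no separate treatment of the case $\epsilon<0$ is needed. Combining the equivalence with this existence and uniqueness of $(\mu^i,\mu^o)$ then yields the full proposition.
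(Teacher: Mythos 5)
Your proof is correct. The first part (the equivalence of $\Lambda_\theta=0$ with problem \eqref{dir1} via the change of variables $x=\epsilon\xi$, the homogeneity $S_n(\epsilon\xi-\epsilon y)=|\epsilon|^{2-n}S_n(\xi-y)$, and the identity $\epsilon^{n-2}|\epsilon|^{2-n}=(\mathrm{sgn}\,\epsilon)^n=\theta$) is precisely the mechanism the paper invokes when it cites the change of variables theorem and Lemma~\ref{homeo}; this is also the right explanation for why $\theta$ appears in $\Lambda^i_\theta$ and why the sign of $\epsilon$ matters only through $\theta$. Where you diverge is in establishing unique solvability of the integral system: you set up a Fredholm alternative ($L_\theta$ an isomorphism by Lemma~\ref{homeo}, the off-diagonal part $K_\epsilon$ compact because its kernels are smooth thanks to $\epsilon\,\mathrm{cl}\Omega^i\subseteq\Omega^o$) and then kill the kernel by extending the null solution $u$ harmonically into $\epsilon\Omega^i$ and into $\mathbb{R}^n\setminus\mathrm{cl}\Omega^o$, concluding with the jump relations for the normal derivative of the single layer. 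The paper's one-line proof instead points to a more direct reduction: the unique solution $u$ of \eqref{dir1} splits uniquely (by a Liouville-type patching argument) into a function harmonic in $\Omega^o$ plus a function harmonic in $\mathbb{R}^n\setminus\epsilon\,\mathrm{cl}\Omega^i$ vanishing at infinity, and each piece is represented by a unique single layer density via the homeomorphism of Lemma~\ref{homeo}; this avoids both the Fredholm alternative and the jump relations, at the price of having to justify the decomposition. Both arguments are standard and complete; yours is somewhat more machinery-heavy but equally valid, and the uniqueness-of-decomposition step in the paper's route is essentially the same potential-theoretic fact as your kernel-triviality step.
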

\begin{proof} It is a straightforward consequence of the Theorem of change of variables in integrals, of well known properties of functions in Schauder spaces, and of Lemma~\ref{homeo}.  
\end{proof}

\medskip

We note that the system of equations in \eqref{dir1} is defined for $\epsilon\neq 0$. Instead we can consider equation $\Lambda_{\theta}=0$ also for $\epsilon=0$. In the following Proposition~\ref{L0} we study equation \eqref{L=0}  for $\epsilon=0$ and $\theta \in \{-1,1\}$.

\begin{prop}\label{L0}
Let $\Omega^i$, $\Omega^o$ be as in \eqref{e1}.  Let $\theta\in\{-1,1\}$. Let $(f^i, f^o)\in C^{1,\alpha}(\partial\Omega^i)\times C^{1,\alpha}(\partial\Omega^o)$.  Then, there exists a unique pair of functions $(\mu^i, \mu^o)\in C^{0,\alpha}(\partial\Omega^i)\times C^{0,\alpha}(\partial\Omega^o)$ such that 
\begin{equation}\label{L0=0}
\Lambda_\theta[0,f^i,f^o,\mu^i,\mu^o]=(0,0)\,.
\end{equation} Moreover, the function $u\equiv v[\mu^o]_{|\mathrm{cl}\Omega^o}$  is the unique solution in $C^{1,\alpha}(\mathrm{cl}\Omega^o)$ of the boundary value problem 
\[
\left\{
\begin{array}{ll}
\Delta u=0&\text{ in }\Omega^o\,,\\
u=f^o&\text{ on }\partial\Omega^o\,.
\end{array}
\right.
\] \end{prop}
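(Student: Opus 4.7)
The plan is to exploit the fact that setting $\epsilon=0$ in the definition of $\Lambda_\theta$ causes the coupling between the two equations to simplify dramatically. First I would observe that, since $n\ge 3$, the factor $\epsilon^{n-2}$ kills the integral over $\partial\Omega^i$ appearing in $\Lambda^o_\theta$, and that $S_n(\epsilon x - y)|_{\epsilon=0}=S_n(-y)=S_n(y)$ is independent of $x\in\partial\Omega^i$. Consequently the system $\Lambda_\theta[0,f^i,f^o,\mu^i,\mu^o]=(0,0)$ reduces to the pair
\[
\theta\, v[\mu^i]_{|\partial\Omega^i}(x)+c[\mu^o]-f^i(x)=0\quad\forall x\in\partial\Omega^i,\qquad v[\mu^o]_{|\partial\Omega^o}=f^o,
\]
where $c[\mu^o]\equiv\int_{\partial\Omega^o}S_n(y)\mu^o(y)\,d\sigma_y=v[\mu^o](0)$ is a real constant depending only on $\mu^o$.

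Next I would solve the two equations in cascade. The second equation determines $\mu^o$: by Lemma~\ref{homeo} the map $\mu\mapsto v[\mu]_{|\partial\Omega^o}$ is a linear homeomorphism from $C^{0,\alpha}(\partial\Omega^o)$ onto $C^{1,\alpha}(\partial\Omega^o)$, so there is exactly one $\mu^o\in C^{0,\alpha}(\partial\Omega^o)$ with $v[\mu^o]_{|\partial\Omega^o}=f^o$. Once $\mu^o$ is fixed, the scalar $c[\mu^o]$ is determined, and the first equation becomes $v[\mu^i]_{|\partial\Omega^i}=\theta(f^i-c[\mu^o])$, which has a unique solution $\mu^i\in C^{0,\alpha}(\partial\Omega^i)$ by another application of Lemma~\ref{homeo} (using that $\theta^2=1$). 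This yields existence and uniqueness of $(\mu^i,\mu^o)$.

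Finally I would verify the last assertion. By Lemma~\ref{homeo} the function $u\equiv v[\mu^o]_{|\mathrm{cl}\Omega^o}$ lies in $C^{1,\alpha}(\mathrm{cl}\Omega^o)$; it is harmonic in $\Omega^o$ because single layer potentials are harmonic off the supporting boundary; and the boundary condition $u=f^o$ on $\partial\Omega^o$ is exactly the second component of $\Lambda_\theta[0,\cdot]=0$. Uniqueness of the solution of the Dirichlet problem on $\Omega^o$ is a standard consequence of the maximum principle for the Laplace operator.

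I do not anticipate any serious obstacle: the proposition is essentially a bookkeeping computation carried out at $\epsilon=0$, once one recognizes the decoupling described above. The only mildly subtle point is to observe carefully that the integral $\int_{\partial\Omega^o}S_n(\epsilon x-y)\mu^o(y)\,d\sigma_y$ collapses to a constant (and not to zero) at $\epsilon=0$, so that the first equation couples $\mu^i$ to $\mu^o$ through the additive term $c[\mu^o]$; however this constant is harmless because it is already known once $\mu^o$ has been determined from the outer equation.
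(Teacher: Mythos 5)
Your proposal is correct and follows essentially the same route as the paper: the paper's proof likewise reduces $\Lambda_\theta[0,\cdot]=(0,0)$ to the system $\theta\, v[\mu^i]_{|\partial\Omega^i}+v[\mu^o](0)=f^i$, $v[\mu^o]_{|\partial\Omega^o}=f^o$ and then invokes Lemma~\ref{homeo}. You have merely spelled out the cascade (first $\mu^o$, then $\mu^i$) and the final Dirichlet-problem verification, which the paper leaves implicit.
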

\begin{proof} We observe that the equation in \eqref{L0=0} is equivalent to the following system of equations
\[
\left\{
\begin{array}{ll}
\theta v[\mu^i]_{|\partial\Omega^i}+v[\mu^o](0)=f^i&\text{ on }\partial\Omega^i\,,\\
v[\mu^o]_{|\partial\Omega^o}=f^o&\text{ on }\partial\Omega^o\,.
\end{array}
\right.
\] 
Then the validity of the Lemma can be deduced by Lemma~\ref{homeo}.
\end{proof}

\medskip

 In the following Propositions~\ref{Mr}, \ref{Meven} and \ref{Modd} we exploit the Implicit Function Theorem  for real analytic maps to investigate the dependence of the solution $(\mu^i,\mu^o)$ of the equations in \eqref{L=0} and \eqref{L0=0} upon $(\epsilon,f^i,f^o)$. In particular, in Proposition~\ref{Mr} we study what happens for $\epsilon$ small, while in Propositions~\ref{Meven} and \ref{Modd} we consider the case of dimension $n$ even and odd, respectively.  To prove Propositions~\ref{Mr}, \ref{Meven} and \ref{Modd} we need
 to analyze the regularity of the operator $\Lambda_\theta$. The definition of $\Lambda_\theta$ involves the single layer potential $v[\mu]$ and also integral operators which display no singularity. To analyze their regularity we  need the following Lemma~\ref{anal}.

\begin{lem}\label{anal}
Let $\Omega$, $\tilde\Omega$ be open bounded subsets of $\mathbb{R}^n$ of class $C^{1,\alpha}$. Then the following statements hold. 
\begin{enumerate}
\item[(i)] The map $G$ from $\{(\psi,\phi,\mu)\in C^{1,\alpha}(\partial\tilde\Omega,\mathbb{R}^n)\times C^{1,\alpha}(\partial\Omega,\mathbb{R}^n)\times C^{0,\alpha}(\partial\Omega)\,:\,\psi(\partial\tilde\Omega)\cap\phi(\partial\Omega)=\emptyset\}
$ to $C^{1,\alpha}(\partial\tilde\Omega)$ which takes $(\psi,\phi,\mu)$ to the function $G[\psi,\phi,\mu]$ defined by
\[
G[\psi,\phi,\mu](x)\equiv\int_{\partial\Omega}S_n(\psi(x)-\phi(y))\mu(y)\,d\sigma_y\qquad\forall x\in\partial\tilde\Omega\,,
\] is real analytic.
\item[(ii)] The map $H$ from $\{(\Phi,\phi,\mu)\in C^{1,\alpha}(\mathrm{cl}\tilde\Omega,\mathbb{R}^n)\times C^{1,\alpha}(\partial\Omega,\mathbb{R}^n)\times C^{0,\alpha}(\partial\Omega)\,:\,\Phi(\mathrm{cl}\tilde\Omega)\cap\phi(\partial\Omega)=\emptyset\}
$ to $C^{1,\alpha}(\mathrm{cl}\tilde\Omega)$ which takes $(\Phi,\phi,\mu)$ to the function $H[\Phi,\phi,\mu]$ defined by
\[
H[\Phi,\phi,\mu](x)\equiv\int_{\partial\Omega}S_n(\Phi(x)-\phi(y))\mu(y)\,d\sigma_y\qquad\forall x\in\mathrm{cl}\tilde\Omega\,,
\] is real analytic.
\end{enumerate}
\end{lem}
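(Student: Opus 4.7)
The plan is to establish both statements by showing that $G$ and $H$ are locally given by convergent power series in the arguments. Since both maps are linear in $\mu$, the essential content is real analyticity jointly in $(\psi,\phi)$ (resp.\ $(\Phi,\phi)$); joint real analyticity in all three variables then follows from the fact that a continuous bilinear pairing between Banach spaces is automatically real analytic.

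First I would verify that the two domains are open. If $\psi_0(\partial\tilde\Omega)\cap\phi_0(\partial\Omega)=\emptyset$, then by compactness of $\partial\tilde\Omega\times\partial\Omega$ there exists $\delta_0>0$ such that $|\psi_0(x)-\phi_0(y)|\ge 2\delta_0$ for all $(x,y)$. Since the embedding $C^{1,\alpha}\hookrightarrow C^0$ is continuous, the inequality $|\psi(x)-\phi(y)|\ge \delta_0$ persists on a sufficiently small $C^{1,\alpha}$-neighborhood of $(\psi_0,\phi_0)$. The same argument, with $\Phi$ and $\mathrm{cl}\tilde\Omega$ in place of $\psi$ and $\partial\tilde\Omega$, handles the domain of $H$.

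Next, for a base point $(\psi_0,\phi_0,\mu_0)$ in the domain of $G$, I would exploit the real analyticity of $S_n$ on $\mathbb{R}^n\setminus\{0\}$. Writing $\psi=\psi_0+\tilde\psi$ and $\phi=\phi_0+\tilde\phi$, I would Taylor-expand
\[
S_n\bigl(\psi(x)-\phi(y)\bigr)=\sum_{\beta\in\mathbb{N}^n}\frac{1}{\beta!}\,\partial^\beta S_n\bigl(\psi_0(x)-\phi_0(y)\bigr)\bigl(\tilde\psi(x)-\tilde\phi(y)\bigr)^\beta,
\]
then multiply by $\mu(y)$, integrate over $\partial\Omega$, and regroup by total degree in $(\tilde\psi,\tilde\phi,\mu-\mu_0)$ so as to expose a formal power series of continuous multilinear forms with values in $C^{1,\alpha}(\partial\tilde\Omega)$. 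Estimating each term relies on three standard ingredients: (a) $C^{1,\alpha}(\partial\tilde\Omega)$ is a Banach algebra, so products coming from the components of $\tilde\psi$ remain under control in the Schauder norm; (b) one has a Cauchy-type bound $|\partial^\beta S_n(z)|\le C\,r^{-|\beta|}|\beta|!$ uniformly on the compact neighborhood of $\psi_0(\partial\tilde\Omega)-\phi_0(\partial\Omega)$ provided by the analyticity of $S_n$; and (c) for each fixed kernel $g$ of the appropriate regularity, the linear map $\mu\mapsto\int_{\partial\Omega}g(\cdot,y)\mu(y)\,d\sigma_y$ is continuous into the target Schauder space.

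The main obstacle is showing that these three estimates combine into a geometric-type bound that makes the formal series absolutely convergent on a neighborhood of $(\psi_0,\phi_0,\mu_0)$; once this is established, the sum defines a real analytic operator at the base point in the Banach-space sense of Deimling. Statement (ii) is proved by the same scheme, with $\Phi$ in place of $\psi$, with $C^{1,\alpha}(\mathrm{cl}\tilde\Omega)$ in place of $C^{1,\alpha}(\partial\tilde\Omega)$ as the target Banach algebra, and with $\mathrm{cl}\tilde\Omega$ in place of $\partial\tilde\Omega$ throughout the compactness arguments; no additional idea is needed, because the integration variable and its domain $\partial\Omega$ are identical in the two parts.
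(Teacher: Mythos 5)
Your route is genuinely different from the paper's, which disposes of this lemma in one line by citing the real-analyticity result of Lanza de Cristoforis and Musolino \cite{LaMu10b} and, behind it, the classical composition-operator theorems of B\"ohme--Tomi, Henry and Valent. What you sketch is essentially the proof that lives inside those references: a Taylor expansion of $S_n$ off its singularity, a regrouping into continuous multilinear terms, and a geometric bound. So the comparison is really ``explicit direct argument'' versus ``deferral to known results''; your version buys self-containedness at the cost of having to carry out the hard estimate yourself, and that is precisely where your proposal stops short.

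The gap is that the decisive step --- the geometric bound making the series of multilinear forms absolutely convergent in $C^{1,\alpha}(\partial\tilde\Omega)$ --- is named as ``the main obstacle'' but not executed, and it is exactly the nontrivial part. Two points must be controlled. First, to estimate the $C^{1,\alpha}$-norm (rather than the sup-norm) of the degree-$N$ term one must differentiate in $x$ and then take H\"older quotients of the gradient, which brings in derivatives $\partial^{\beta'}S_n$ with $|\beta'|=|\beta|+2$; by the Cauchy estimates on the compact set $\psi_0(\partial\tilde\Omega)-\phi_0(\partial\Omega)$ this costs an extra factor of order $(N+1)(N+2)r^{-2}$, which is polynomial in $N$ and hence harmless, but this has to be said. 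Second, the multinomial count $\sum_{|\beta|=N}N!/\beta!=n^{N}$ must be absorbed into the radius of convergence, shrinking the admissible neighborhood of $(\psi_0,\phi_0)$ by a dimensional factor. Without these two verifications the claim that the ``three standard ingredients'' combine is an assertion, not a proof. A smaller point: your opening remark that joint analyticity in $\mu$ ``follows from the fact that a continuous bilinear pairing is automatically real analytic'' is too quick as stated (separate analyticity plus linearity does not by itself give joint analyticity in Banach spaces); it is your actual plan --- keeping $\mu-\mu_0$ as a degree-one factor inside each multilinear term of the expansion --- that correctly yields joint analyticity, so you should lean on that rather than on the loose preliminary remark.
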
 
\begin{proof} The validity of the Lemma follows immediately by the results of Lanza de Cristoforis and the second author in~\cite{LaMu10b}. See also Lanza de Cristoforis~\cite[Theorem~6.2]{Lan07a} where a known result on composition operators has been exploited (cf.~B{\"o}hme and Tomi~\cite[p.10]{BT73}, Henry~\cite[p.29]{Hen82}, Valent~\cite[Theorem~5.2, p.44]{Val88}.)\end{proof}

\begin{prop}\label{Mr}
Let $\Omega^i$, $\Omega^o$ be as in \eqref{e1}. Let $\epsilon_0$ be as in \eqref{e2}. Let $\theta\in\{-1,1\}$. Let $(\tilde f^i,\tilde f^o)\in C^{1,\alpha}(\partial\Omega^i)\times C^{1,\alpha}(\partial\Omega^o)$. Let the pair $(\tilde\mu^i,\tilde\mu^o)$ be the unique solution  in $C^{0,\alpha}(\partial\Omega^i)\times C^{0,\alpha}(\partial\Omega^o)$ of $\Lambda_\theta[0,\tilde f^i,\tilde f^o,\tilde\mu^i,\tilde\mu^o]=0$. Then there exist $\tilde\epsilon$ in $]0,\epsilon_0[$, and an open neighborhood $\mathcal{U}$ of $(\tilde f^i,\tilde f^o)$ in $C^{1,\alpha}(\partial\Omega^i)\times C^{1,\alpha}(\partial\Omega^o)$, and an open neighborhood $\mathcal{V}$ of $(\tilde\mu^i,\tilde\mu^o)$ in $C^{0,\alpha}(\partial\Omega^i)\times C^{0,\alpha}(\partial\Omega^o)$, and a real analytic operator $\tilde{M}_\theta\equiv(\tilde{M}_\theta^i,\tilde{M}_\theta^o)$ from $]-\tilde\epsilon,\tilde\epsilon[\times\mathcal{U}$ to $\mathcal{V}$ such that the set of zeros of $\Lambda_\theta$ in $]-\tilde\epsilon,\tilde\epsilon[\times\mathcal{U}\times\mathcal{V}$ coincides with the graph of $\tilde{M}_\theta$. In particular,
\begin{equation}\label{Mr_eqn1}
\Lambda_\theta[\epsilon, f^i, f^o,\tilde{M}_\theta[\epsilon,f^i,f^o]]=(0,0)\qquad\forall (\epsilon,f^i,f^o)\in ]-\tilde\epsilon,\tilde\epsilon[\times\mathcal{U}\,.
\end{equation}
\end{prop}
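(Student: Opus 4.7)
The plan is to apply the real analytic Implicit Function Theorem (cf.\ Deimling~\cite[Theorem~15.3]{De85}) to the equation $\Lambda_\theta=(0,0)$ at the base point $(0,\tilde f^i,\tilde f^o,\tilde\mu^i,\tilde\mu^o)$, which by Proposition~\ref{L0} lies in the zero set of $\Lambda_\theta$. Two items have to be verified: that $\Lambda_\theta$ is real analytic on its domain, and that its partial differential with respect to $(\mu^i,\mu^o)$ at the base point is a linear homeomorphism from $C^{0,\alpha}(\partial\Omega^i)\times C^{0,\alpha}(\partial\Omega^o)$ onto $C^{1,\alpha}(\partial\Omega^i)\times C^{1,\alpha}(\partial\Omega^o)$.

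For the analyticity, I would examine the four integral contributions separately. The terms $\theta v[\mu^i]_{|\partial\Omega^i}$ and $v[\mu^o]_{|\partial\Omega^o}$ are independent of $\epsilon$ and continuous linear in the corresponding densities by Lemma~\ref{homeo}. The two $\epsilon$-dependent terms I would rewrite in the format of Lemma~\ref{anal}(i): setting $\psi_\epsilon(x)\equiv\epsilon x$, one has $\int_{\partial\Omega^o}S_n(\epsilon x-y)\mu^o(y)\,d\sigma_y=G[\psi_\epsilon,\mathrm{id}_{\partial\Omega^o},\mu^o](x)$ for $x\in\partial\Omega^i$, and symmetrically $\int_{\partial\Omega^i}S_n(x-\epsilon y)\mu^i(y)\,d\sigma_y=G[\mathrm{id}_{\partial\Omega^o},\psi_\epsilon,\mu^i](x)$ for $x\in\partial\Omega^o$. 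The inclusion \eqref{e2} guarantees $\epsilon\partial\Omega^i\cap\partial\Omega^o=\emptyset$ for all $\epsilon\in\,]-\epsilon_0,\epsilon_0[$, so the disjointness hypothesis of Lemma~\ref{anal}(i) is met; since $\epsilon\mapsto\psi_\epsilon$ is affine (hence real analytic) into $C^{1,\alpha}(\partial\Omega^i,\mathbb{R}^n)$, both integrals depend real analytically on $(\epsilon,\mu^i,\mu^o)$. Multiplication by the polynomial $\epsilon^{n-2}$ preserves analyticity, and therefore $\Lambda_\theta$ is real analytic.

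The invertibility of the partial differential is the crux of the argument. Since $\Lambda_\theta$ is linear in $(\mu^i,\mu^o)$ and, crucially using $n\ge 3$, the factor $\epsilon^{n-2}$ vanishes at $\epsilon=0$, the partial differential at the base point equals the linear map $T$ given by $T(\nu^i,\nu^o)\equiv(\theta v[\nu^i]_{|\partial\Omega^i}+v[\nu^o](0),\,v[\nu^o]_{|\partial\Omega^o})$. To invert $T$ I would proceed triangularly: given $(g^i,g^o)$, first recover $\nu^o\in C^{0,\alpha}(\partial\Omega^o)$ as the unique solution of $v[\nu^o]_{|\partial\Omega^o}=g^o$ via the homeomorphism statement of Lemma~\ref{homeo}; the evaluation $\nu^o\mapsto v[\nu^o](0)$ is then a continuous linear functional because $0\in\mathrm{cl}\Omega^o$ and the first continuity assertion of Lemma~\ref{homeo} (with $\Omega=\Omega^o$) furnishes continuity of $\nu^o\mapsto v[\nu^o]_{|\mathrm{cl}\Omega^o}$; finally recover $\nu^i$ as the unique solution of $v[\nu^i]_{|\partial\Omega^i}=\theta(g^i-v[\nu^o](0))$, once more by Lemma~\ref{homeo}. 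This exhibits $T^{-1}$ explicitly as a composition of bounded linear maps, so $T$ is a linear homeomorphism.

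With these two properties in hand, the real analytic Implicit Function Theorem provides $\tilde\epsilon\in\,]0,\epsilon_0[$, open neighborhoods $\mathcal{U}$ of $(\tilde f^i,\tilde f^o)$ and $\mathcal{V}$ of $(\tilde\mu^i,\tilde\mu^o)$, and the real analytic operator $\tilde{M}_\theta=(\tilde{M}_\theta^i,\tilde{M}_\theta^o)$ whose graph coincides with the zero set of $\Lambda_\theta$ in $]-\tilde\epsilon,\tilde\epsilon[\times\mathcal{U}\times\mathcal{V}$, establishing \eqref{Mr_eqn1}. I expect the identification and inversion of the triangular linear map $T$ to be the most delicate point of the argument; the remainder reduces to bookkeeping on top of Lemmas~\ref{homeo} and~\ref{anal}.
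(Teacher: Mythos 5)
Your proposal is correct and follows essentially the same route as the paper: real analyticity of $\Lambda_\theta$ via Lemmas~\ref{homeo} and~\ref{anal}(i), identification of the partial differential at $\epsilon=0$ (where the $\epsilon^{n-2}$ term drops out since $n\ge 3$) as the triangular map $T$, its invertibility, and then the real analytic Implicit Function Theorem. The only cosmetic difference is that the paper deduces the homeomorphism property of the differential from bijectivity via the Open Mapping Theorem, whereas you exhibit $T^{-1}$ directly as a composition of bounded operators; both rest on the same triangular solve already used in Proposition~\ref{L0}.
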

\begin{proof}  We note that the existence and uniqueness of the solution $(\tilde\mu^i,\tilde\mu^o)$ follows by Proposition~\ref{L0}. We  now prove the statement by applying the Implicit Function Theorem for real analytic maps to the equation in \eqref{Mr_eqn1} around $(0,\tilde f^i,\tilde f^o,\tilde\mu^i,\tilde\mu^o)$.  To do so, we first show  that $\Lambda_\theta$ is real analytic from $]-\epsilon_0,\epsilon_0[\times C^{1,\alpha}(\partial\Omega^i)\times C^{1,\alpha}(\partial\Omega^o)\times C^{0,\alpha}(\partial\Omega^i)\times C^{0,\alpha}(\partial\Omega^o)$ to $C^{1,\alpha}(\partial\Omega^i)\times C^{1,\alpha}(\partial\Omega^o)$. By Lemma~\ref{anal} (i), the map from $]-\epsilon_0,\epsilon_0[\times C^{0,\alpha}(\partial\Omega^o)$ to $C^{1,\alpha}(\partial\Omega^i)$ which takes $(\epsilon,\mu^o)$ to the function $\int_{\partial\Omega^o}S_n(\epsilon x-y)\,\mu^o(y)\,d\sigma_y$ of $x\in\partial\Omega^i$ is real analytic. Lemma~\ref{homeo} implies that the map from $C^{0,\alpha}(\partial\Omega^i)$ to  $C^{1,\alpha}(\partial\Omega^i)$ which takes $\mu^i$ to the function $\int_{\partial\Omega^i}S_n(x-y)\,\mu^i(y)\,d\sigma_y$ of $x\in\partial\Omega^i$ is real analytic.  Then, by  standard calculus in Banach space we deduce that $\Lambda^i_\theta$ is real analytic from $]-\epsilon_0,\epsilon_0[\times C^{1,\alpha}(\partial\Omega^i)\times C^{1,\alpha}(\partial\Omega^o)\times C^{0,\alpha}(\partial\Omega^i)\times C^{0,\alpha}(\partial\Omega^o)$ to $C^{1,\alpha}(\partial\Omega^i)$. By a similar argument we can show that $\Lambda^o_\theta$ is real analytic from $]-\epsilon_0,\epsilon_0[\times C^{1,\alpha}(\partial\Omega^i)\times C^{1,\alpha}(\partial\Omega^o)\times C^{0,\alpha}(\partial\Omega^i)\times C^{0,\alpha}(\partial\Omega^o)$ to $C^{1,\alpha}(\partial\Omega^o)$. Hence $\Lambda_\theta$ is real analytic. Now we observe that the partial differential  of $\Lambda_\theta$ at $(0,\tilde f^i,\tilde f^o,\tilde\mu^i,\tilde\mu^o)$ with respect to the variables $(\mu^i,\mu^o)$ is delivered by the following formulas
\begin{eqnarray}\label{0721eqn1}
\lefteqn{\partial_{(\mu^i,\mu^o)}\Lambda_\theta^i[0,\tilde f^i ,\tilde f^o,\tilde\mu^i,\tilde\mu^o](\bar{\mu}^i,\bar{\mu}^o)(x)}\\
\nonumber
&&=\theta\int_{\partial \Omega^i}S_n(x- y)\bar{\mu}^i(y)\,d\sigma_y
+\int_{\partial \Omega^o}S_n(y)\bar{\mu}^o(y)\,d\sigma_y\ \qquad\qquad \forall x \in \partial \Omega^i\,,\\
\nonumber
\lefteqn{\partial_{(\mu^i,\mu^o)}\Lambda_\theta^o[0,\tilde f^i,\tilde f^o,\tilde\mu^i,\tilde\mu^o](\bar{\mu}^i,\bar{\mu}^o)(x)=\int_{\partial \Omega^o}S_n(x-y)\bar{\mu}^o(y)\,d\sigma_y\quad \forall x \in \partial\Omega^o}
\end{eqnarray}
for all $(\bar{\mu}^i,\bar{\mu}^o)\in C^{0,\alpha}(\partial \Omega^i) \times C^{0,\alpha}(\partial \Omega^o)$. We have to show that the differential $\partial_{(\mu^i,\mu^o)}\Lambda_\theta[0,\tilde f^i,\tilde f^o,\tilde\mu^i,\tilde\mu^o]$ is a linear homeomorphism. By the Open Mapping Theorem, it suffices to show that it is a bijection from $C^{0,\alpha}(\partial \Omega^i)\times C^{0,\alpha}(\partial \Omega^o)$ to $C^{1,\alpha}(\partial \Omega^i)\times C^{1,\alpha}(\partial \Omega^o)$. Let $(\bar{f}^i,\bar{f}^o)\in C^{1,\alpha}(\partial \Omega^i)\times C^{1,\alpha}(\partial \Omega^o)$. By the equalities in \eqref{0721eqn1} and by  Lemma~\ref{homeo} we deduce that there exists a unique pair $(\bar{\mu}^i,\bar{\mu}^o)\in C^{0,\alpha}(\partial \Omega^i)\times C^{0,\alpha}(\partial \Omega^o)$ such that
\[
\partial_{(\mu^i,\mu^o)}\Lambda_\theta[0,\tilde f^i,\tilde f^o,\tilde\mu^i,\tilde\mu^o](\bar{\mu}^i,\bar{\mu}^o)=(\bar{f}^i,\bar{f}^o)\,
\] (see also the proof of Lemma~\ref{L0}.)
Hence we can invoke the Implicit Function Theorem for real analytic maps in Banach spaces and deduce the existence of $\tilde\epsilon$, $\mathcal{U}$, $\mathcal{V}$, $\tilde M_{\theta}$ as in the statement.
\end{proof}

\begin{prop}\label{Meven}
Let $\Omega^i$, $\Omega^o$ be as in \eqref{e1}. Let $\epsilon_0$ be as in \eqref{e2}. If the dimension $n$ is even, then there exists a real analytic map $M\equiv(M^i,M^o)$ from $]-\epsilon_0,\epsilon_0[\times C^{1,\alpha}(\partial\Omega^i)\times C^{1,\alpha}(\partial\Omega^o)$ to $C^{0,\alpha}(\partial\Omega^i)\times C^{0,\alpha}(\partial\Omega^o)$ such that 
\begin{equation}\label{LM=0}
\Lambda_1[\epsilon,f^i,f^o,M[\epsilon,f^i,f^o]]=(0,0)
\end{equation} for all $(\epsilon,f^i,f^o)\in ]-\epsilon_0,\epsilon_0[\times C^{1,\alpha}(\partial\Omega^i)\times C^{1,\alpha}(\partial\Omega^o)$.
\end{prop}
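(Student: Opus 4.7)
The plan is to apply the analytic Implicit Function Theorem at every point $(\epsilon_\ast, f^i_\ast, f^o_\ast)$ in $]-\epsilon_0, \epsilon_0[\times C^{1,\alpha}(\partial\Omega^i)\times C^{1,\alpha}(\partial\Omega^o)$ and then to glue the local solution maps together by exploiting global uniqueness. Crucially, when $n$ is even one has $(\mathrm{sgn}\,\epsilon)^n=1$ for every $\epsilon\neq 0$, so Proposition~\ref{L} guarantees the existence and uniqueness of a pair $(\mu^i,\mu^o)\in C^{0,\alpha}(\partial\Omega^i)\times C^{0,\alpha}(\partial\Omega^o)$ solving $\Lambda_1[\epsilon,f^i,f^o,\mu^i,\mu^o]=(0,0)$ for every $\epsilon\in]-\epsilon_0,\epsilon_0[\setminus\{0\}$, while Proposition~\ref{L0} with $\theta=1$ handles $\epsilon=0$. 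I would use these propositions to define the set-theoretic map $M$ on the whole parameter space and then show that it is real analytic.

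Next I would verify, at every triple $(\epsilon_\ast, f^i_\ast, f^o_\ast)$, the two hypotheses of the analytic Implicit Function Theorem applied to $\Lambda_1$ around $(\epsilon_\ast, f^i_\ast, f^o_\ast, M[\epsilon_\ast, f^i_\ast, f^o_\ast])$. Real analyticity of $\Lambda_1$ on the whole of $]-\epsilon_0, \epsilon_0[\times C^{1,\alpha}(\partial\Omega^i)\times C^{1,\alpha}(\partial\Omega^o)\times C^{0,\alpha}(\partial\Omega^i)\times C^{0,\alpha}(\partial\Omega^o)$ is obtained exactly as in the proof of Proposition~\ref{Mr}, combining Lemma~\ref{homeo} for the diagonal single layer contributions with Lemma~\ref{anal} for the off-diagonal terms; the argument is uniform in $\epsilon$ since, for $x\in\partial\Omega^i$ and $y\in\partial\Omega^o$ (or vice versa), the point $\epsilon x - y$ stays away from the origin on all of $]-\epsilon_0,\epsilon_0[$.

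The main obstacle is showing that the partial differential $\partial_{(\mu^i,\mu^o)}\Lambda_1$ is a linear homeomorphism from $C^{0,\alpha}(\partial\Omega^i)\times C^{0,\alpha}(\partial\Omega^o)$ to $C^{1,\alpha}(\partial\Omega^i)\times C^{1,\alpha}(\partial\Omega^o)$ at every point of the graph of $M$. I would handle this uniformly in $\epsilon_\ast$ by observing that $\Lambda_1$ is affine in $(f^i,f^o,\mu^i,\mu^o)$, so this differential is independent of $(f^i_\ast, f^o_\ast, \mu^i_\ast, \mu^o_\ast)$ and the equation $\partial_{(\mu^i,\mu^o)}\Lambda_1(\bar\mu^i,\bar\mu^o)=(\bar f^i,\bar f^o)$ is literally the system $\Lambda_1[\epsilon_\ast,\bar f^i,\bar f^o,\bar\mu^i,\bar\mu^o]=(0,0)$. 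For $\epsilon_\ast\neq 0$ bijectivity is then a direct consequence of Proposition~\ref{L}, and for $\epsilon_\ast=0$ of Proposition~\ref{L0} (equivalently, of the argument already carried out in Proposition~\ref{Mr}). The Open Mapping Theorem then upgrades bijectivity to a linear homeomorphism. Applying the analytic Implicit Function Theorem at each $(\epsilon_\ast, f^i_\ast, f^o_\ast)$ produces a local real analytic branch of solutions of \eqref{LM=0}; the global uniqueness noted in the first paragraph forces every such branch to coincide locally with $M$, so $M$ is real analytic on the whole of $]-\epsilon_0, \epsilon_0[\times C^{1,\alpha}(\partial\Omega^i)\times C^{1,\alpha}(\partial\Omega^o)$, as required.
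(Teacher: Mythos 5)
Your proposal is correct and follows essentially the same route as the paper: define $M$ set-theoretically via Propositions~\ref{L} and \ref{L0} (using that $(\mathrm{sgn}\,\epsilon)^n=1$ for $n$ even), establish analyticity of $\Lambda_1$ via Lemmas~\ref{homeo} and \ref{anal}, and apply the analytic Implicit Function Theorem at each point after checking that $\partial_{(\mu^i,\mu^o)}\Lambda_1$ is a linear homeomorphism. The only cosmetic difference is that you justify invertibility of the differential by noting that $\Lambda_1$ is affine in $(f^i,f^o,\mu^i,\mu^o)$ and reducing directly to Propositions~\ref{L} and \ref{L0}, whereas the paper writes out the differential explicitly and handles $\epsilon=0$ by citing Proposition~\ref{Mr}; both amount to the same argument.
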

\begin{proof} By Propositions~\ref{L} and \ref{L0} we deduce that there exists a unique map $M$ from $]-\epsilon_0,\epsilon_0[\times C^{1,\alpha}(\partial\Omega^i)\times C^{1,\alpha}(\partial\Omega^o)$ to $C^{0,\alpha}(\partial\Omega^i)\times C^{0,\alpha}(\partial\Omega^o)$ which satisfies \eqref{LM=0}. We show that $M$ is real analytic by exploiting the Implicit Function Theorem for real analytic maps.  By Lemmas~\ref{homeo} and \ref{anal} and by standard calculus in Banach space we verify that $\Lambda_1$ is real analytic from $]-\epsilon_0,\epsilon_0[\times C^{1,\alpha}(\partial\Omega^i)\times C^{1,\alpha}(\partial\Omega^o)\times C^{0,\alpha}(\partial\Omega^i)\times C^{0,\alpha}(\partial\Omega^o)$ to $C^{1,\alpha}(\partial\Omega^i)\times C^{1,\alpha}(\partial\Omega^o)$ (see also the proof of Proposition~\ref{Mr}.) By the Implicit Function Theorem for real analytic maps, it clearly suffices to prove that if $(\epsilon,f^i,f^o)$ is in $]-\epsilon_0,\epsilon_0[\times C^{1,\alpha}(\partial\Omega^i)\times C^{1,\alpha}(\partial\Omega^o)$, then the partial differential of $\Lambda_1$ at $(\epsilon,f^i, f^o,M[\epsilon,f^i,f^o])$ with respect to the variables $(\mu^i,\mu^o)$ is a linear homeomorphism from $C^{0,\alpha}(\partial \Omega^i)\times C^{0,\alpha}(\partial \Omega^o)$ onto $C^{1,\alpha}(\partial \Omega^i)\times C^{1,\alpha}(\partial \Omega^o)$. By Proposition \ref{Mr}, we can confine  ourselves to consider $(\epsilon,f^i,f^o)$ in $\bigl(]-\epsilon_0,\epsilon_0[\setminus\{0\}\bigr)\times C^{1,\alpha}(\partial\Omega^i)\times C^{1,\alpha}(\partial\Omega^o)$. By standard calculus in Banach space, the partial differential $\partial_{(\mu^i,\mu^o)}\Lambda_1[\epsilon,f^i, f^o,M[\epsilon,f^i,f^o]]$  is delivered by the following formulas
\begin{eqnarray*}
\lefteqn{\partial_{(\mu^i,\mu^o)}\Lambda_1^i[\epsilon, f^i, f^o,M[\epsilon,f^i,f^o]](\bar{\mu}^i,\bar{\mu}^o)(x)}\\
&&= \int_{\partial \Omega^i}S_n(x- y)\bar{\mu}^i(y)\,d\sigma_y
+\int_{\partial \Omega^o}S_n(\epsilon x-y)\bar{\mu}^o(y)\,d\sigma_y\quad\qquad \forall x \in \partial \Omega^i\, ,\\
\lefteqn{\partial_{(\mu^i,\mu^o)}\Lambda_1^o[\epsilon, f^i, f^o,M[\epsilon,f^i,f^o]](\bar{\mu}^i,\bar{\mu}^o)(x)}\\ 
&&=\epsilon^{n-2}\int_{\partial \Omega^i}S_n(x-\epsilon y)\bar{\mu}^i(y)\,d\sigma_y+\int_{\partial \Omega^o}S_n(x-y)\bar{\mu}^o(y)\,d\sigma_y\quad \forall x \in \partial\Omega^o\nonumber
\end{eqnarray*}
for all $(\bar{\mu}^i,\bar{\mu}^o)\in C^{0,\alpha}(\partial \Omega^i) \times C^{0,\alpha}(\partial \Omega^o)$. Then by Lemma~\ref{homeo} and by the Open Mapping Theorem, we deduce that $\partial_{(\mu^i,\mu^o)}\Lambda_1[\epsilon, f^i, f^o,M[\epsilon,f^i,f^o]]$ is a linear homeomorphism from $C^{0,\alpha}(\partial \Omega^i)\times C^{0,\alpha}(\partial \Omega^o)$ onto $C^{1,\alpha}(\partial \Omega^i)\times C^{1,\alpha}(\partial \Omega^o)$. The proof of the Proposition is now complete.\end{proof}

\begin{prop}\label{Modd}
Let $\Omega^i$, $\Omega^o$ be as in \eqref{e1}. Let $\epsilon_0$ be as in \eqref{e2}. If the dimension $n$ is odd, then there exist  real analytic maps $M_+\equiv(M^i_+,M^o_+)$ from $]0,\epsilon_0[\times C^{1,\alpha}(\partial\Omega^i)\times C^{1,\alpha}(\partial\Omega^o)$ to $C^{0,\alpha}(\partial\Omega^i)\times C^{0,\alpha}(\partial\Omega^o)$ and $M_-\equiv(M^i_-,M^o_-)$ from $]-\epsilon_0,0[\times C^{1,\alpha}(\partial\Omega^i)\times C^{1,\alpha}(\partial\Omega^o)$ to  $C^{0,\alpha}(\partial\Omega^i)\times C^{0,\alpha}(\partial\Omega^o)$ such that 
\begin{equation*}
\Lambda_1[\epsilon,f^i,f^o,M_+[\epsilon,f^i,f^o]]=(0,0)
\end{equation*} for all $(\epsilon,f^i,f^o)\in ]0,\epsilon_0[\times C^{1,\alpha}(\partial\Omega^i)\times C^{1,\alpha}(\partial\Omega^o)$, and such that 
\begin{equation*}
\Lambda_{-1}[\epsilon,f^i,f^o,M_-[\epsilon,f^i,f^o]]=(0,0)
\end{equation*} for all $(\epsilon,f^i,f^o)\in ]-\epsilon_0,0[\times C^{1,\alpha}(\partial\Omega^i)\times C^{1,\alpha}(\partial\Omega^o)$. 
\end{prop}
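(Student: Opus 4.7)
The plan is to adapt the proof of Proposition~\ref{Meven} by treating the two sides of zero separately. For $\epsilon>0$ the sign $\theta=(\mathrm{sgn}\,\epsilon)^n$ equals $+1$, and for $\epsilon<0$ it equals $-1$, since $n$ is odd. Hence Proposition~\ref{L} associates to each $(\epsilon,f^i,f^o)$ with $\epsilon>0$ (respectively $\epsilon<0$) a unique solution $(\mu^i,\mu^o)\in C^{0,\alpha}(\partial\Omega^i)\times C^{0,\alpha}(\partial\Omega^o)$ of $\Lambda_1[\epsilon,f^i,f^o,\mu^i,\mu^o]=(0,0)$ (respectively $\Lambda_{-1}[\epsilon,f^i,f^o,\mu^i,\mu^o]=(0,0)$). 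I would define $M_+[\epsilon,f^i,f^o]$ and $M_-[\epsilon,f^i,f^o]$ to be these unique solutions on $]0,\epsilon_0[$ and $]-\epsilon_0,0[$ respectively; well-definedness and the two identities in the statement then follow immediately from Proposition~\ref{L}.

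To upgrade well-definedness to real analyticity, I would fix a base point $(\epsilon_*,f^i_*,f^o_*)$ with $\epsilon_*\neq 0$, set $\theta\equiv(\mathrm{sgn}\,\epsilon_*)^n$, and apply the Implicit Function Theorem for real analytic maps to $\Lambda_\theta$ at $(\epsilon_*,f^i_*,f^o_*,M_\pm[\epsilon_*,f^i_*,f^o_*])$. The real analyticity of $\Lambda_1$ and $\Lambda_{-1}$ on $]-\epsilon_0,\epsilon_0[\times C^{1,\alpha}(\partial\Omega^i)\times C^{1,\alpha}(\partial\Omega^o)\times C^{0,\alpha}(\partial\Omega^i)\times C^{0,\alpha}(\partial\Omega^o)$ with values in $C^{1,\alpha}(\partial\Omega^i)\times C^{1,\alpha}(\partial\Omega^o)$ follows from Lemmas~\ref{homeo} and~\ref{anal} together with standard calculus in Banach spaces, exactly as in the proof of Proposition~\ref{Meven}. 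The partial differential $\partial_{(\mu^i,\mu^o)}\Lambda_\theta$ at the base point is given by formulas identical to those exhibited in the proof of Proposition~\ref{Meven}, except for a factor $\theta=\pm 1$ in front of the diagonal single-layer operator on $\partial\Omega^i$; Lemma~\ref{homeo} together with the Open Mapping Theorem then show that this differential is a linear homeomorphism from $C^{0,\alpha}(\partial\Omega^i)\times C^{0,\alpha}(\partial\Omega^o)$ onto $C^{1,\alpha}(\partial\Omega^i)\times C^{1,\alpha}(\partial\Omega^o)$ (one inverts the $\partial\Omega^o$-component first, and then the $\partial\Omega^i$-component). Uniqueness in Proposition~\ref{L} ensures that the local real analytic solution returned by the Implicit Function Theorem coincides with $M_\pm$ in a neighborhood of the base point, whence $M_\pm$ is real analytic on its entire domain.

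The main point of the proposition, and the principal difference with Proposition~\ref{Meven}, is that no analogue of this local argument is available at a point with $\epsilon_*=0$ using a single operator $\Lambda_\theta$: the sign $\theta=(\mathrm{sgn}\,\epsilon)^n$ jumps across zero when $n$ is odd, so $M_+$ and $M_-$ cannot in general be patched into a single real analytic map on the whole interval $]-\epsilon_0,\epsilon_0[$. This is the step at which the parity of $n$ enters the argument in an essential way, and it is exactly this obstruction that forces the appearance of two separate maps in the statement.
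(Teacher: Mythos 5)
Your proposal is correct and is essentially the paper's own (omitted) proof: the authors state only that Proposition~\ref{Modd} follows by ``a slight modification'' of the proof of Proposition~\ref{Meven}, and your argument is exactly that modification, treating $]0,\epsilon_0[$ and $]-\epsilon_0,0[$ separately with $\Lambda_{1}$ and $\Lambda_{-1}$ and invoking the Implicit Function Theorem at base points with $\epsilon_*\neq 0$. One small inaccuracy: for $\epsilon_*\neq 0$ the differential $\partial_{(\mu^i,\mu^o)}\Lambda_\theta$ is not triangular (its $\partial\Omega^o$-component contains the term $\epsilon_*^{n-2}\int_{\partial\Omega^i}S_n(x-\epsilon_* y)\bar{\mu}^i(y)\,d\sigma_y$), so one cannot literally invert the $\partial\Omega^o$-component first; its bijectivity is instead precisely the existence-and-uniqueness statement of Proposition~\ref{L} applied with data $(\bar{f}^i,\bar{f}^o)$, which is how the analogous step in Proposition~\ref{Meven} is justified.
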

\begin{proof} It is a slight modification of the proof of Proposition~\ref{Meven} and is accordingly omitted.\end{proof}

\section{Main results for real analytic families of harmonic functions on $\Omega(\epsilon)$}\label{main}

We prove in this section our main Theorems~\ref{even} and \ref{odd}. In Theorem~\ref{even} we consider the case of dimension $n$ even. We note that Theorem~\ref{even} implies the validity of statement (j) in Section~\ref{intro}.

\begin{thm}\label{even}
Assume that the dimension $n$ is even. Let $\Omega^i$, $\Omega^o$ be as in \eqref{e1}. Let $\epsilon_0$ be as in \eqref{e2}. Let $\epsilon_1\in]0,\epsilon_0]$. Let $\{u_\epsilon\}_{\epsilon\in]0,\epsilon_1[}$ be a family of functions which satisfies the condition in (a0) and such that
\begin{enumerate}
\item[(i)] there exists a real analytic operator $B^o$ from $]-\epsilon_1,\epsilon_1[$ to $C^{1,\alpha}(\partial\Omega^o)$ such that $u_{\epsilon}(x)=B^o[\epsilon](x)$ for all $x\in\partial\Omega^o$ and all $\epsilon\in]0,\epsilon_1[$,
\item[(ii)] there exists a real analytic operator $B^i$ from $]-\epsilon_1,\epsilon_1[$ to $C^{1,\alpha}(\partial\Omega^i)$ such that $u_{\epsilon}(\epsilon x)=B^i[\epsilon](x)$ for all $x\in\partial\Omega^i$ and all $\epsilon\in]0,\epsilon_1[$. 
\end{enumerate} Then there exists a family of functions $\{v_\epsilon\}_{\epsilon\in]-\epsilon_1,\epsilon_1[}$ which satisfies the conditions in (b0)--(b2) and such that $u_\epsilon=v_\epsilon$ for all $\epsilon\in]0,\epsilon_1[$.
\end{thm}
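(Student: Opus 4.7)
The plan is to produce $v_\epsilon$ as a single layer representation whose densities are real analytic in $\epsilon$ on the whole interval $]-\epsilon_1,\epsilon_1[$, and then to use the fact that for $n$ even both the sign factor $\theta=(\mathrm{sgn}\,\epsilon)^n$ and the microscopic rescaling factor $\epsilon^{n-2}$ have a trivial sign behaviour. Without loss of generality we may assume $\epsilon_1\le\epsilon_0$, so that Proposition~\ref{Meven} yields the real analytic operator $M=(M^i,M^o)$. Composing with the real analytic boundary data maps $B^i$, $B^o$, we obtain real analytic curves
\[
\epsilon\mapsto \mu^i_\epsilon\equiv M^i[\epsilon,B^i[\epsilon],B^o[\epsilon]]\,,\qquad
\epsilon\mapsto \mu^o_\epsilon\equiv M^o[\epsilon,B^i[\epsilon],B^o[\epsilon]]
\]
from $]-\epsilon_1,\epsilon_1[$ to $C^{0,\alpha}(\partial\Omega^i)$ and $C^{0,\alpha}(\partial\Omega^o)$, respectively, which by construction satisfy $\Lambda_1[\epsilon,B^i[\epsilon],B^o[\epsilon],\mu^i_\epsilon,\mu^o_\epsilon]=(0,0)$ for every $\epsilon\in]-\epsilon_1,\epsilon_1[$.

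I would then define, for $\epsilon\in]-\epsilon_1,\epsilon_1[\setminus\{0\}$ and $x\in\mathrm{cl}\Omega(\epsilon)$,
\[
v_\epsilon(x)\equiv\epsilon^{n-2}\!\int_{\partial\Omega^i}\!\!S_n(x-\epsilon y)\mu^i_\epsilon(y)\,d\sigma_y
+\int_{\partial\Omega^o}\!\!S_n(x-y)\mu^o_\epsilon(y)\,d\sigma_y\,,
\]
and $v_0(x)\equiv\int_{\partial\Omega^o}S_n(x-y)\mu^o_0(y)\,d\sigma_y$ for $x\in\mathrm{cl}\Omega^o$. Condition (b0) is immediate from the classical fact that single layer potentials are harmonic off their supports, combined with Lemma~\ref{homeo} for the $C^{1,\alpha}$ regularity up to the boundary. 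To check that $v_\epsilon=u_\epsilon$ for $\epsilon\in]0,\epsilon_1[$, note that when $n$ is even we have $\theta=(\mathrm{sgn}\,\epsilon)^n=1$ for every $\epsilon>0$, so Proposition~\ref{L} identifies $v_\epsilon$ with the unique $C^{1,\alpha}(\mathrm{cl}\Omega(\epsilon))$ solution of the Dirichlet problem \eqref{dir1} with data $f^i=B^i[\epsilon]$, $f^o=B^o[\epsilon]$; hypotheses (i) and (ii) say that $u_\epsilon$ solves the same problem, so uniqueness gives the equality.

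For the macroscopic condition (b1), I would set
\[
V_M[\epsilon](x)\equiv\epsilon^{n-2}\!\int_{\partial\Omega^i}\!\!S_n(x-\epsilon y)\mu^i_\epsilon(y)\,d\sigma_y
+\int_{\partial\Omega^o}\!\!S_n(x-y)\mu^o_\epsilon(y)\,d\sigma_y\,,\qquad x\in\mathrm{cl}\Omega_M.
\]
Since $\mathrm{cl}\Omega_M\cap\epsilon\mathrm{cl}\Omega^i=\emptyset$ for $\epsilon\in]-\epsilon_M,\epsilon_M[$ and $\mathrm{cl}\Omega_M\subseteq\Omega^o$, both integrals are covered by Lemma~\ref{anal}(ii) (with $\Phi=\mathrm{id}$ and $\phi(y)=\epsilon y$ in the first, $\phi=\mathrm{id}$ in the second), and combined with the real analyticity of $\epsilon\mapsto\mu^i_\epsilon,\mu^o_\epsilon$ this shows $V_M$ is real analytic into $C^{1,\alpha}(\mathrm{cl}\Omega_M)$. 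By construction $V_M[\epsilon]=v_\epsilon|_{\mathrm{cl}\Omega_M}$ for every $\epsilon\in]-\epsilon_M,\epsilon_M[$, including $\epsilon=0$ by Proposition~\ref{L0}. The microscopic condition (b2) is where the parity of $n$ plays the decisive role: since $n-2$ is even, $|\epsilon(x-y)|^{2-n}=|\epsilon|^{2-n}|x-y|^{2-n}$ gives $\epsilon^{n-2}S_n(\epsilon(x-y))=S_n(x-y)$, so after the change of variable we obtain
\[
v_\epsilon(\epsilon x)=\int_{\partial\Omega^i}\!\!S_n(x-y)\mu^i_\epsilon(y)\,d\sigma_y
+\int_{\partial\Omega^o}\!\!S_n(\epsilon x-y)\mu^o_\epsilon(y)\,d\sigma_y\,,
\]
for every $\epsilon\in]-\epsilon_m,\epsilon_m[\setminus\{0\}$. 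Taking the right hand side as the definition of $V_m[\epsilon](x)$ for $x\in\mathrm{cl}\Omega_m$ and all $\epsilon\in]-\epsilon_m,\epsilon_m[$, the real analyticity of $V_m$ follows from Lemma~\ref{homeo} applied to the first term (composed with the real analytic curve $\epsilon\mapsto\mu^i_\epsilon$) and from Lemma~\ref{anal}(ii) applied to the second term (using that $\epsilon\mathrm{cl}\Omega_m\subseteq\Omega^o$ ensures $\epsilon x\neq y$ on $\partial\Omega^o$). The main obstacle is precisely the cancellation $\epsilon^{n-2}S_n(\epsilon z)=S_n(z)$, which removes the otherwise singular $\epsilon^{n-2}$ prefactor and allows $V_m$ to be extended real analytically through $\epsilon=0$; this is exactly the point at which the hypothesis that $n$ is even is used in an essential way.
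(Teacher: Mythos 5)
Your proposal is correct and follows essentially the same route as the paper: construct $v_\epsilon$ from the single layer densities $M[\epsilon,B^i[\epsilon],B^o[\epsilon]]$ of Proposition~\ref{Meven}, identify $v_\epsilon$ with $u_\epsilon$ for $\epsilon>0$ via Proposition~\ref{L} and uniqueness of the Dirichlet problem, and obtain (b2) from the homogeneity $\epsilon^{n-2}S_n(\epsilon z)=(\mathrm{sgn}\,\epsilon)^{n-2}S_n(z)=S_n(z)$ valid for $n$ even. The only difference is that the paper is more careful in verifying (b1) and (b2): since Lemma~\ref{anal} requires a $C^{1,\alpha}$ domain and $\mathrm{cl}\Omega_M$ (resp.\ $\mathrm{cl}\Omega_m$) need not be smooth and may touch $\partial\Omega^o$ (resp.\ $\partial\Omega^i$), it first sandwiches $\Omega_M$ and $\Omega_m$ inside intermediate domains $\tilde\Omega$ of class $C^{1,\alpha}$ constructed via partitions of unity and Sard's theorem, and it treats the outer single layer restricted to $\mathrm{cl}\Omega_M$ via Lemma~\ref{homeo} rather than Lemma~\ref{anal}(ii); these are routine repairs of your slightly too direct applications of the lemmas.
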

\begin{proof}  Let $M\equiv(M^i,M^o)$ be the map in Proposition~\ref{Meven}. We set
\[
v^i_\epsilon(x)\equiv \epsilon^{n-2}\int_{\partial\Omega^i}S_n(x - \epsilon y)M^i[\epsilon,B^i[\epsilon],B^o[\epsilon]](y)\,d\sigma_y\,\qquad\forall x\in\mathrm{cl}\Omega(\epsilon) \, ,
\]
for all $\epsilon \in ]-\epsilon_1,\epsilon_1[\setminus \{0\}$, and $v^i_0(x)\equiv 0$ for all $x \in \mathrm{cl}\Omega^o$.
Then we set
\begin{eqnarray*}
v^o_\epsilon(x)&\equiv& \int_{\partial\Omega^o}S_n(x - y)M^o[\epsilon,B^i[\epsilon],B^o[\epsilon]](y)\,d\sigma_y\,, \\
v_\epsilon(x)&\equiv& v^i_\epsilon(x) + v^o_\epsilon(x)\,,\qquad\qquad\qquad\qquad\forall x\in\mathrm{cl}\Omega(\epsilon)\, , 
\end{eqnarray*} for all $\epsilon\in]-\epsilon_1,\epsilon_1[$.
By classical Potential Theory, we deduce that $\{v_\epsilon\}_{\epsilon\in]-\epsilon_1,\epsilon_1[}$ satisfies the condition in (b0). Now let $\Omega_M$, $\epsilon_M$ be as in (b1). Let $V_M\equiv v_{\epsilon|\mathrm{cl}\Omega_M}$ for all $\epsilon\in]-\epsilon_M,\epsilon_M[$. We show that $V_M$ is real analytic and hence  $\{v_\epsilon\}_{\epsilon\in]-\epsilon_1,\epsilon_1[}$ satisfies the condition in (b1).  To do so we prove that $V_M$ is real analytic in a neighborhood of a fixed point $\epsilon^*$ of $]-\epsilon_M,\epsilon_M[$.  We note that $\mathrm{cl}\Omega_M\cap\epsilon^*\mathrm{cl}\Omega^i=\emptyset$. Then, by a standard argument based on the existence of smooth Partitions of Unity and on Sard's Theorem we can show that there exists an open bounded set $\tilde\Omega$ of class $C^{1,\alpha}$ such that $\Omega_M\subseteq\tilde\Omega\subseteq\Omega^o$ and $\mathrm{cl}\tilde\Omega\cap\epsilon^*\mathrm{cl}\Omega^i=\emptyset$.
Then, by the continuity of the real function which takes $\epsilon$ to $\mathrm{dist}(\epsilon{\mathrm{cl}\Omega^i}\,,\,\mathrm{cl}\tilde\Omega)\equiv\mathrm{inf}\{|x-y|\,:\,x\in\epsilon{\mathrm{cl}\Omega^i}\,,\,y\in\mathrm{cl}\tilde\Omega\}$ we deduce that there exists $\delta>0$ such that  $\epsilon\mathrm{cl}\Omega^i\cap\mathrm{cl}\tilde\Omega=\emptyset$ for all $\epsilon\in ]\epsilon^*-\delta,\epsilon^*+\delta[$. Possibly shrinking $\delta$ we can assume that $]\epsilon^*-\delta,\epsilon^*+\delta[\subseteq]-\epsilon_1,\epsilon_1[$. Then, by Lemma~\ref{anal}~(ii) and by the real analyticity of $M$, $B^i$, $B^o$ we verify that the map from $]\epsilon^*-\delta,\epsilon^*+\delta[$ to $C^{1,\alpha}(\mathrm{cl}\tilde\Omega)$ which takes $\epsilon$ to $v^i_{\epsilon|\mathrm{cl}\tilde\Omega}$ is real analytic. Then, by the boundedness of the restriction operator from $C^{1,\alpha}(\mathrm{cl}\tilde\Omega)$ to $C^{1,\alpha}(\mathrm{cl}\Omega_M)$ and by standard calculus in Banach space, the map from $]\epsilon^*-\delta,\epsilon^*+\delta[$ to $C^{1,\alpha}(\mathrm{cl}\Omega_M)$ which takes $\epsilon$ to $v^i_{\epsilon|\mathrm{cl}\Omega_M}$ is real analytic.  By Lemma~\ref{homeo}, and by the real analyticity of $M$, $B^i$, $B^o$, and by the boundedness of the restriction operator from $C^{1,\alpha}(\mathrm{cl}\Omega^o)$ to $C^{1,\alpha}(\mathrm{cl}\Omega_M)$ we deduce that the map from $]\epsilon^*-\delta,\epsilon^*+\delta[$ to $C^{1,\alpha}(\mathrm{cl}\Omega_M)$ which takes $\epsilon$ to $v^o_{\epsilon|\mathrm{cl}\Omega_M}$ is real analytic. Then  the map from $]\epsilon^*-\delta,\epsilon^*+\delta[$ to $C^{1,\alpha}(\mathrm{cl}\Omega_M)$ which takes $\epsilon$ to $V_M[\epsilon]=v^i_{\epsilon|\mathrm{cl}\Omega_M}+v^o_{\epsilon|\mathrm{cl}\Omega_M}$ is real analytic. Thus, $\{v_\epsilon\}_{\epsilon\in]-\epsilon_1,\epsilon_1[}$ satisfies the conditions in (b1).  Now we prove that  $\{v_\epsilon\}_{\epsilon\in]-\epsilon_1,\epsilon_1[}$ satisfies the conditions in (b2). Let $\Omega_m$ and $\epsilon_m$ be as in (b2). Let $V_m[\epsilon]$ be defined by
\begin{eqnarray*}
\lefteqn{V_m[\epsilon](x)\equiv \int_{\partial\Omega^i}S_n(x -  y) M^i[\epsilon,B^i[\epsilon],B^o[\epsilon]](y)\,d\sigma_y}\\
&&
\qquad\quad 
+\int_{\partial\Omega^o}S_n(\epsilon x - y) M^o[\epsilon,B^i[\epsilon],B^o[\epsilon]](y)\,d\sigma_y\qquad\forall x\in\mathrm{cl}\Omega_m
\end{eqnarray*} for all $\epsilon\in]-\epsilon_m,\epsilon_m[$. Clearly,
\begin{equation}\label{odd_eqn1}
v_\epsilon (\epsilon x)=V_m[\epsilon](x) \qquad \forall x \in  \mathrm{cl}\Omega_m\,,\  \epsilon \in ]-\epsilon_m,\epsilon_m[\setminus \{0\}\,.
\end{equation}
We prove that the map from $]-\epsilon_m,\epsilon_m[$ to $C^{1,\alpha}(\mathrm{cl}\Omega_m)$ which takes $\epsilon$ to $V_m[\epsilon]$ is real analytic.  To do so we prove that $V_m$ is real analytic in a neighborhood of a fixed point $\epsilon^*$ of $]-\epsilon_m,\epsilon_m[$. By a standard argument based on the existence of smooth Partitions of Unity, and on Sard's Theorem, and on the continuity of the distance function,  we verify that there exist $\delta>0$ and an open bounded subset $\tilde\Omega$ of $\mathbb{R}^n\setminus\mathrm{cl}\Omega^i$ of class $C^{1,\alpha}$ such that $\Omega_m\subseteq\tilde\Omega$ and $\epsilon\mathrm{cl}\tilde\Omega\subseteq\Omega^o$ for all $\epsilon\in ]\epsilon^*-\delta,\epsilon^*+\delta[$. Possibly shrinking $\delta$ we can assume that $]\epsilon^*-\delta,\epsilon^*+\delta[\subseteq]-\epsilon_m,\epsilon_m[$. Then we set
\begin{eqnarray*}
\lefteqn{\tilde V_m[\epsilon](x)\equiv \int_{\partial\Omega^i}S_n(x -  y) M^i[\epsilon,B^i[\epsilon],B^o[\epsilon]](y)\,d\sigma_y}\\
&&
\qquad\quad
+\int_{\partial\Omega^o}S_n(\epsilon x - y) M^o[\epsilon,B^i[\epsilon],B^o[\epsilon]](y)\,d\sigma_y\qquad\forall x\in\mathrm{cl}\tilde\Omega
\end{eqnarray*} for all $\epsilon\in]\epsilon^*-\delta,\epsilon^*+\delta[$.  So that $V_m[\epsilon]=\tilde V_m[\epsilon]_{|\mathrm{cl}\Omega_m}$ for all $\epsilon\in]\epsilon^*-\delta,\epsilon^*+\delta[$. Then, by Lemma~\ref{homeo}, and by Lemma~\ref{anal}~(ii), and by the real analyticity of $M$,  and by standard calculus in Banach space, we deduce that $\tilde V_m$ is real analytic from $]\epsilon^*-\delta,\epsilon^*+\delta[$ to $C^{1,\alpha}(\mathrm{cl}\tilde\Omega)$. Then, by the boundedness of the restriction operator from $C^{1,\alpha}(\mathrm{cl}\tilde\Omega)$ to $C^{1,\alpha}(\mathrm{cl}\Omega_m)$, $V_m$ is real analytic from $]\epsilon^*-\delta,\epsilon^*+\delta[$ to $C^{1,\alpha}(\mathrm{cl}\Omega_m)$. Thus, the validity of (b2) follows. Moreover, by Proposition~\ref{L} and by the uniqueness of the solution of the Dirichlet boundary value problem in $\Omega(\epsilon)$ we deduce that  $u_\epsilon=v_\epsilon$ for $\epsilon\in]0,\epsilon_1[$. The validity of the Theorem is now verified.\end{proof}

\medskip

We now consider the case of dimension $n$ odd and we prove our main Theorem~\ref{odd}. We note that Theorem~\ref{odd} implies the validity of statement (jj) in Section~\ref{intro}.

\begin{thm}\label{odd}
Assume that the dimension $n$ is odd. Let $\Omega^i$, $\Omega^o$ be as in \eqref{e1}. Let $\epsilon_0$ be as in \eqref{e2}. Let $\epsilon_1\in]0,\epsilon_0]$. Let $\{v_\epsilon\}_{\epsilon\in]-\epsilon_1,\epsilon_1[}$ be a family of functions which satisfies the condition in (b0) and such that
\begin{enumerate}
\item[(i)] there exists a real analytic operator $B^o$ from $]-\epsilon_1,\epsilon_1[$ to $C^{1,\alpha}(\partial\Omega^o)$ such that $v_{\epsilon}(x)=B^o[\epsilon](x)$ for all $x\in\partial\Omega^o$ and all $\epsilon\in]-\epsilon_1,\epsilon_1[$,
\item[(ii)] there exists a real analytic operator $B^i$ from $]-\epsilon_1,\epsilon_1[$ to $C^{1,\alpha}(\partial\Omega^i)$ such that $v_{\epsilon}(\epsilon x)=B^i[\epsilon](x)$ for all $x\in\partial\Omega^i$ and all $\epsilon\in]-\epsilon_1,\epsilon_1[\setminus\{0\}$.
\end{enumerate}
Assume that the family $\{v_\epsilon\}_{\epsilon\in]-\epsilon_1,\epsilon_1[}$ satisfies at least one of the following conditions (iii) and (iv).
\begin{enumerate}
\item[(iii)] There exist an open non-empty subset $\Omega_M$ of $\Omega^o\setminus\{0\}$, and a real number $\epsilon_M\in]0,\epsilon_1]$ such that  $\mathrm{cl}\Omega_M\cap\epsilon\mathrm{cl}\Omega^i=\emptyset$ for all $\epsilon\in]-\epsilon_M,\epsilon_M[$, and a real analytic operator  $V_M$ from $]-\epsilon_M,\epsilon_M[$ to $C^{1,\alpha}(\mathrm{cl}\Omega_M)$ such that \[
v_{\epsilon|\mathrm{cl}\Omega_M}=V_M[\epsilon]\qquad\forall\epsilon\in]-\epsilon_M,\epsilon_M[\,. 
\]
\item[(iv)] There exist a bounded open non-empty subset $\Omega_m$ of $\mathbb{R}^n\setminus\mathrm{cl}\Omega^i$, and a real number $\epsilon_m\in]0,\epsilon_1]$ such that  $\epsilon\mathrm{cl}\Omega_m\subseteq\Omega^o$ for all $\epsilon\in]-\epsilon_m,\epsilon_m[$, and a real analytic operator $V_m$ from $]-\epsilon_m,\epsilon_m[$ to $C^{1,\alpha}(\mathrm{cl}\Omega_m)$  such that
\[
v_{\epsilon}(\epsilon\,\cdot\,)_{|\mathrm{cl}\Omega_m}=V_m[\epsilon]\qquad\forall\epsilon\in]-\epsilon_m,\epsilon_m[\setminus \{0\}\,. 
\]
\end{enumerate} Then there exists a family of functions $\{w_\epsilon\}_{\epsilon\in]-\epsilon_1,\epsilon_1[}$ which satisfies the conditions in (c0), (c1) and such that $v_\epsilon=w_{\epsilon|\mathrm{cl}\Omega(\epsilon)}$ for all $\epsilon\in]-\epsilon_1,\epsilon_1[$.
\end{thm}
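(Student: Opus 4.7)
The plan is to combine the two branch representations of Proposition~\ref{Modd} with the local bridging extension of Proposition~\ref{Mr} and to exploit hypothesis (iii) or (iv) through uniqueness of real analytic continuation. The key mechanism is that the assumption forces the ``inner'' single layer density to vanish identically, so that $v_\epsilon$ reduces to a single layer potential on $\partial\Omega^o$ alone, which extends harmonically across the hole to a harmonic function on all of $\Omega^o$.

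For $\epsilon$ in the appropriate half interval, set $\mu_\pm(\epsilon)\equiv M_\pm[\epsilon,B^i[\epsilon],B^o[\epsilon]]$; by Proposition~\ref{Mr} at $\epsilon=0$ with both $\theta=1$ and $\theta=-1$, extend each $\mu_\pm$ to a real analytic map $\tilde\mu_\pm$ on a neighborhood $]-\tilde\epsilon,\tilde\epsilon[$ of $0$ satisfying $\Lambda_{\pm 1}[\epsilon,B^i[\epsilon],B^o[\epsilon],\tilde\mu_\pm(\epsilon)]=0$. Let $\tilde v^{(\pm)}_\epsilon$ be the function defined on $\mathbb{R}^n\setminus(\partial\Omega^o\cup\epsilon\partial\Omega^i)$ by the single layer formula of Proposition~\ref{L} with densities $\tilde\mu_\pm(\epsilon)$; using Lemmas~\ref{homeo} and \ref{anal} and the calculus in Banach space arguments of the proof of Theorem~\ref{even}, the map $\epsilon\mapsto\tilde v^{(\pm)}_\epsilon|_{\mathrm{cl}\Omega_M}$ is real analytic, and $\tilde v^{(\pm)}_\epsilon=v_\epsilon$ on $\mathrm{cl}\Omega(\epsilon)$ whenever $\epsilon$ has the matching sign.

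Under (iii), uniqueness of real analytic continuation on the connected interval $]-\min(\tilde\epsilon,\epsilon_M),\min(\tilde\epsilon,\epsilon_M)[$ forces $\tilde v^{(+)}_\epsilon|_{\mathrm{cl}\Omega_M}=V_M[\epsilon]=\tilde v^{(-)}_\epsilon|_{\mathrm{cl}\Omega_M}$ on this interval. The difference $D_\epsilon\equiv\tilde v^{(+)}_\epsilon-\tilde v^{(-)}_\epsilon$ is a sum of single layer potentials that vanishes on $\Omega_M$; by harmonic continuation on the connected set $\Omega(\epsilon)$, continuity of single layer potentials across $\partial\Omega^o$ and $\epsilon\partial\Omega^i$, and the maximum principle on the hole $\epsilon\Omega^i$ and on the exterior of $\mathrm{cl}\Omega^o$ (with decay at infinity for $n\ge 3$), one gets $D_\epsilon\equiv 0$ on $\mathbb{R}^n$. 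The jump relations for the two disjoint single layer potentials then give $\tilde\mu^o_+=\tilde\mu^o_-$ on $\partial\Omega^o$ and $\tilde\mu^i_+=\tilde\mu^i_-$ on $\partial\Omega^i$; substituting this common density into $\Lambda^i_1=\Lambda^i_{-1}=0$ and subtracting yields $2v[\tilde\mu^i_+]_{|\partial\Omega^i}=0$, whence $\tilde\mu^i_\pm\equiv 0$ near $0$ by Lemma~\ref{homeo}. Case (iv) is parallel: one uses the rescaled integrals, keeping track of the identity $\epsilon^{n-2}S_n(\epsilon(y-z))=(\mathrm{sgn}\,\epsilon)\,S_n(y-z)$ (valid for $n$ odd and $\epsilon\neq 0$) to express $v_\epsilon(\epsilon\,\cdot\,)_{|\mathrm{cl}\Omega_m}$ via $\tilde\mu_\pm$, derives $\tilde\mu^o_+=\tilde\mu^o_-$ and $\tilde\mu^i_+=-\tilde\mu^i_-$ near $0$, and then subtracts $\Lambda^o_1$ from $\Lambda^o_{-1}$ to show that the function $\int_{\partial\Omega^i}S_n(\,\cdot\,-\epsilon y)\tilde\mu^i_+(y)\,d\sigma_y$ vanishes on $\partial\Omega^o$; the same harmonic-continuation and jump argument then again yields $\tilde\mu^i_+\equiv 0$ near $0$.

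Real analyticity of $\mu_+^i$ on the connected interval $]0,\epsilon_1[$ together with $\tilde\mu^i_+\equiv 0$ near $0$ now forces $\mu_+^i\equiv 0$ on $]0,\epsilon_1[$, and similarly $\mu_-^i\equiv 0$ on $]-\epsilon_1,0[$. Hence $v_\epsilon=v[\mu^o(\epsilon)]_{|\mathrm{cl}\Omega(\epsilon)}$ for all $\epsilon\in]-\epsilon_1,\epsilon_1[\setminus\{0\}$, where $\mu^o$---obtained by gluing $\mu^o_+$, $\mu^o_-$ and their common extension at $0$---is real analytic on $]-\epsilon_1,\epsilon_1[$. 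Setting $w_\epsilon\equiv v[\mu^o(\epsilon)]_{|\mathrm{cl}\Omega^o}$, Lemma~\ref{homeo} gives a real analytic map into $C^{1,\alpha}(\mathrm{cl}\Omega^o)$ satisfying (c0), (c1) and agreeing with $v_\epsilon$ on $\mathrm{cl}\Omega(\epsilon)$ (the equality at $\epsilon=0$ following by Dirichlet uniqueness in $\Omega^o$ with boundary data $B^o[0]$). The main obstacle I anticipate is the harmonic continuation step that propagates $\tilde v^{(+)}=\tilde v^{(-)}$ from the small set provided by (iii) or (iv) to the whole of $\mathbb{R}^n$, and the ensuing careful use of the jump relations---particularly in case (iv), where the sign flip in the microscopic rescaling makes the extraction of the density identity more delicate.
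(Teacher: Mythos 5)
Your proposal is correct and shares the paper's overall architecture: localize at $\epsilon=0$ via the Implicit Function Theorem (Proposition~\ref{Mr}), use (iii) or (iv) together with the Identity Principle and harmonic continuation to show that the inner layer density vanishes near $0$, propagate this vanishing to all of $]-\epsilon_1,\epsilon_1[$ through Proposition~\ref{Modd} and analyticity, and finally define $w_\epsilon$ as the outer single layer potential. The central step, however, is carried out by a genuinely different mechanism. The paper works with the single branch $\tilde M_1$ (only $\theta=1$): since for $n$ odd and $\epsilon<0$ one has $\epsilon^{n-2}S_n(\epsilon(x-y))=-S_n(x-y)$, evaluating the representation of $v_\epsilon$ on $\epsilon\partial\Omega^i$ (case (iii)) or on $\partial\Omega^o$ (case (iv)) and comparing with the identity $\Lambda_1[\epsilon,B^i[\epsilon],B^o[\epsilon],\tilde M_1[\epsilon,B^i[\epsilon],B^o[\epsilon]]]=(0,0)$ yields $2\int_{\partial\Omega^i}S_n(\,\cdot\,-y)\tilde M^i_1[\epsilon,B^i[\epsilon],B^o[\epsilon]](y)\,d\sigma_y=0$ directly, after which Lemma~\ref{homeo} kills the density. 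You instead introduce both branches $\tilde\mu_\pm$ ($\theta=\pm1$), show via harmonic continuation to the whole of $\mathbb{R}^n$ that the two single layer representations coincide, recover the density relations from the jump of the normal derivative across the two surfaces, and only then subtract the integral equations. Both routes work; yours requires the jump relations for the normal derivative of the single layer potential (standard for $C^{0,\alpha}$ densities on $C^{1,\alpha}$ boundaries, but not among the tools the paper quotes), whereas the paper's avoids them at the cost of tracking the sign $(\mathrm{sgn}\,\epsilon)^{n-2}$ explicitly. Two routine points you should make explicit in a final write-up: you must restrict to a common interval on which both implicit-function branches are defined and $(B^i[\epsilon],B^o[\epsilon])$ lies in both neighborhoods $\mathcal{U}$ furnished by Proposition~\ref{Mr}; and the identification of $M_\pm[\epsilon,B^i[\epsilon],B^o[\epsilon]]$ with $\tilde\mu_\pm(\epsilon)$ on the overlap, which underpins both the propagation of $\mu^i_\pm\equiv 0$ and the final gluing of $\mu^o$, follows from the uniqueness statement of Proposition~\ref{L}.
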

\begin{proof}  Let $\tilde M_1\equiv(\tilde M^i_1,\tilde M^o_1)$, $\tilde\epsilon$, $\mathcal{U}$ be as in Propositions~\ref{Mr} with $\theta\equiv 1$, $\tilde f^i\equiv B^i[0]$ and $\tilde f^o\equiv B^o[0]$. We show that $\tilde M^i_1[\epsilon,B^i[\epsilon], B^o[\epsilon]]=0$ for $\epsilon$ in an open neighborhood of $0$. To do so we first prove that both conditions (iii) and (iv) imply that there exists  $\tilde\epsilon_*\in]0,\tilde\epsilon]$ such that 
\begin{equation}\label{110802_eqn1}
\int_{\partial\Omega^i}S_n(x -  y)\tilde M^i_1[\epsilon,B^i[\epsilon],B^o[\epsilon]](y)\,d\sigma_y=0\quad\forall x\in\partial\Omega^i\,,\ \epsilon\in]-\tilde\epsilon_*,0[\,.
\end{equation}

Assume that $\{v_\epsilon\}_{\epsilon\in]-\epsilon_1,\epsilon_1[}$ satisfies the condition in (iii). We can take $\tilde\epsilon_M\in]0,\inf\{\epsilon_M, \tilde\epsilon\}]$ such that $(B^i[\epsilon],B^o[\epsilon])\in\mathcal{U}$ for all $\epsilon\in]-\tilde\epsilon_M,\tilde\epsilon_M[$. Then we set
\begin{eqnarray}\label{110304_eqn3}
\tilde v_\epsilon(x)&\equiv& \epsilon^{n-2}\int_{\partial\Omega^i}S_n(x - \epsilon y)\tilde M^i_1[\epsilon,B^i[\epsilon],B^o[\epsilon]](y)\,d\sigma_y\\
\nonumber
&&
+\int_{\partial\Omega^o}S_n(x - y)\tilde M^o_1[\epsilon,B^i[\epsilon],B^o[\epsilon]](y)\,d\sigma_y\qquad\forall x\in\mathrm{cl}\Omega(\epsilon)\,,
\end{eqnarray} 
for all $\epsilon \in ]-\tilde\epsilon_M,\tilde\epsilon_M[ \setminus \{0\}$,  and
\begin{equation}\label{110606_eqn1}
\tilde{v}_0(x)\equiv \int_{\partial\Omega^o}S_n(x - y)\tilde M^o_1[0,B^i[0],B^o[0]](y)\,d\sigma_y\qquad\forall x\in\mathrm{cl}\Omega^o\, .
\end{equation}
Then Propositions~\ref{L} and \ref{L0} imply that $\tilde v_\epsilon=v_\epsilon$ for all $\epsilon\in]0,\tilde\epsilon_M[$. So that $\tilde v_{\epsilon|\mathrm{cl}\Omega_M}=v_{\epsilon|\mathrm{cl}\Omega_M}$ for all $\epsilon\in]0,\tilde\epsilon_M[$. We observe that the map from $]-\tilde\epsilon_M,\tilde\epsilon_M[$ to $C^{1,\alpha}(\mathrm{cl}\Omega_M)$ which takes $\epsilon$ to $\tilde v_{\epsilon|\mathrm{cl}\Omega_M}$ is real analytic (see also the argument developed in the proof of Theorem~\ref{even} for $V_M$.) Then, by the assumption in (iii) and by the Identity Principle for real analytic maps, we have $\tilde v_{\epsilon|\mathrm{cl}\Omega_M}=v_{\epsilon|\mathrm{cl}\Omega_M}$ for all $\epsilon\in]-\tilde\epsilon_M,\tilde\epsilon_M[$. We note that $\tilde v_\epsilon$ is harmonic on $\Omega(\epsilon)$ for all $\epsilon\in]-\tilde\epsilon_M,\tilde\epsilon_M[$. Thus, the equality $\tilde v_{\epsilon|\mathrm{cl}\Omega_M}=v_{\epsilon|\mathrm{cl}\Omega_M}$ implies that $\tilde v_{\epsilon}=v_{\epsilon}$ on the whole of $\mathrm{cl}\Omega(\epsilon)$ for all $\epsilon\in]-\tilde\epsilon_M,\tilde\epsilon_M[$. In particular, 
\[
\tilde v_{\epsilon}(\epsilon x)=v_{\epsilon}(\epsilon x)\qquad\forall x\in\partial\Omega^i\,,\ \epsilon\in]-\tilde\epsilon_M,0[\,,
\] which in turn implies that
\begin{equation}\label{110304_eqn1}
\begin{split}
&{-\int_{\partial\Omega^i}S_n(x -  y)\tilde M^i_1[\epsilon,B^i[\epsilon],B^o[\epsilon]](y)\,d\sigma_y}\\
&\quad
+\int_{\partial\Omega^o}S_n(\epsilon x - y)\tilde M^o_1[\epsilon,B^i[\epsilon],B^o[\epsilon]](y)\,d\sigma_y=B^i[\epsilon](x)
\end{split}
\end{equation}  for all  $x\in\partial\Omega^i$, $\epsilon\in]-\tilde\epsilon_M,0[$.
By the definition of $\tilde M_1$ in Proposition~\ref{Mr} we have $\Lambda_1[\epsilon,B^i[\epsilon],B^o[\epsilon],\tilde M_1[\epsilon,B^i[\epsilon],B^o[\epsilon]]]=0$ for all $\epsilon\in]-\tilde\epsilon_M,\tilde\epsilon_M[$ (cf.~Proposition~\ref{Mr}.) In particular, for $\epsilon\in]-\tilde\epsilon_M,0[$ we have
\begin{eqnarray}\label{110304_eqn2}
\lefteqn{\Lambda^i_1[\epsilon,B^i[\epsilon],B^o[\epsilon],\tilde M_1[\epsilon,B^i[\epsilon],B^o[\epsilon]]](x)}\\
\nonumber
&&
=\int_{\partial\Omega^i}S_n(x -  y)\tilde M^i_1[\epsilon,B^i[\epsilon],B^o[\epsilon]](y)\,d\sigma_y\\
\nonumber
&&
\
+\int_{\partial\Omega^o}S_n(\epsilon x - y)\tilde M^o_1[\epsilon,B^i[\epsilon],B^o[\epsilon]](y)\,d\sigma_y-B^i[\epsilon](x)=0\quad\forall x\in\partial\Omega^i\,.
\end{eqnarray} 
Then, by \eqref{110304_eqn1} and \eqref{110304_eqn2}  we deduce the validity of \eqref{110802_eqn1} in case (iii) with $\tilde\epsilon_*\equiv\tilde\epsilon_M$.

We now assume that (iv) holds. Then there exists $\tilde\epsilon_m\in]0,\inf\{\epsilon_m,\tilde\epsilon\}]$ such that $(B^i[\epsilon],B^o[\epsilon])\in\mathcal{U}$ for all $\epsilon\in]-\tilde\epsilon_m,\tilde\epsilon_m[$. We set
\begin{eqnarray}
\lefteqn{\bar v_\epsilon(x)\equiv \epsilon^{n-2}(\mathrm{sgn} \, \epsilon) \int_{\partial\Omega^i}S_n(x - \epsilon y)\tilde M^i_1[\epsilon,B^i[\epsilon],B^o[\epsilon]](y)\,d\sigma_y}\nonumber\\
&&
\qquad
+\int_{\partial\Omega^o}S_n(x - y)\tilde M^o_1[\epsilon,B^i[\epsilon],B^o[\epsilon]](y)\,d\sigma_y\qquad\forall x\in\mathrm{cl}\Omega(\epsilon)\nonumber
\end{eqnarray} for all $\epsilon\in ]-\tilde\epsilon_m,\tilde\epsilon_m[\setminus\{0\}$ 
and
\[
\bar{v}_0(x)\equiv \int_{\partial\Omega^o}S_n(x - y)\tilde M^o_1[0,B^i[0],B^o[0]](y)\,d\sigma_y\qquad\forall x\in\mathrm{cl}\Omega^o\, .
\]
By Propositions~\ref{L} and \ref{L0} we deduce that $\bar v_\epsilon=v_\epsilon$ for all $\epsilon\in]0,\tilde\epsilon_m[$. So that 
\[
\bar v_{\epsilon}(\epsilon x)=v_{\epsilon}(\epsilon x)\qquad \forall x \in \mathrm{cl}\Omega_m\,, \quad \epsilon\in]0,\tilde\epsilon_m[\, .
\] 
Then we set
\begin{eqnarray*}
\lefteqn{\bar V_m[\epsilon](x)\equiv \int_{\partial\Omega^i}S_n(x -y)\tilde M^i_1[\epsilon,B^i[\epsilon],B^o[\epsilon]](y)\,d\sigma_y}\\
\nonumber
&&
\qquad
+\int_{\partial\Omega^o}S_n(\epsilon x - y)\tilde M^o_1[\epsilon,B^i[\epsilon],B^o[\epsilon]](y)\,d\sigma_y\qquad\forall x\in\mathrm{cl}\Omega_m
\end{eqnarray*}
for all $\epsilon \in ]-\tilde{\epsilon}_m,\tilde{\epsilon}_m[$. We observe that $\bar V_m$ is a real analytic map from $]-\tilde\epsilon_m,\tilde\epsilon_m[$ to $C^{1,\alpha}(\mathrm{cl}\Omega_m)$ and that $\bar{v}_\epsilon(\epsilon x)=\bar{V}_m[\epsilon](x)$ for all $x \in \mathrm{cl}\Omega_m$ and for all $\epsilon \in ]-\tilde{\epsilon}_m,\tilde{\epsilon}_m[\setminus \{0\}$ (see also the argument developed in the proof of Theorem~\ref{even} for $V_m$.)  Then, by the assumption in (iv) and by the Identity Principle for real analytic maps we have $\bar V_m[\epsilon]=V_m[\epsilon]$ for all $\epsilon\in]-\tilde\epsilon_m,\tilde\epsilon_m[$, and thus $\bar v_{\epsilon}(\epsilon \,\cdot\,)_{|\mathrm{cl}\Omega_m}=v_{\epsilon}(\epsilon\, \cdot\,)_{|\mathrm{cl}\Omega_m}$ for all $\epsilon\in]-\tilde\epsilon_m,\tilde\epsilon_m[\setminus \{0\}$. We now note that $\bar v_\epsilon$ is harmonic on $\Omega(\epsilon)$ for all $\epsilon\in]-\tilde\epsilon_m,\tilde\epsilon_m[$. Thus, the equality $\bar v_{\epsilon}(\epsilon \,\cdot\,)_{|\mathrm{cl}\Omega_m}=v_{\epsilon}(\epsilon\, \cdot\,)_{|\mathrm{cl}\Omega_m}$ implies that $\bar v_{\epsilon}=v_{\epsilon}$ on the whole of $\mathrm{cl}\Omega(\epsilon)$ for all $\epsilon\in]-\tilde\epsilon_m,\tilde\epsilon_m[\setminus \{0\}$. In particular, 
\[
\bar v_{\epsilon}(x)=v_{\epsilon}(x)\qquad\forall x\in\partial\Omega^o\,,\ \epsilon\in]-\tilde\epsilon_m,0[\,, 
\] which in turn implies that
\begin{eqnarray}\label{110304_eqn1m}
\lefteqn{-\epsilon^{n-2}\int_{\partial\Omega^i}S_n(x -  \epsilon y)\tilde M^i_1[\epsilon,B^i[\epsilon],B^o[\epsilon]](y)\,d\sigma_y}\\
\nonumber
&&
+\int_{\partial\Omega^o}S_n( x - y)\tilde M^o_1[\epsilon,B^i[\epsilon],B^o[\epsilon]](y)\,d\sigma_y=B^o[\epsilon](x)
\end{eqnarray}  for all $x\in\partial\Omega^o$, $\epsilon\in]-\tilde\epsilon_m,0[$.
By the definition of $\tilde M_1$ in Proposition~\ref{Mr} we have $\Lambda_1[\epsilon,B^i[\epsilon],B^o[\epsilon],\tilde M_1[\epsilon,B^i[\epsilon],B^o[\epsilon]]]=0$ for all $\epsilon\in]-\tilde\epsilon_m,\tilde\epsilon_m[$ (cf.~Proposition~\ref{Mr}.) In particular, for $\epsilon\in]-\tilde\epsilon_m,0[$ we have
\begin{eqnarray}\label{110304_eqn2m}
\lefteqn{\Lambda^o_1[\epsilon,B^i[\epsilon],B^o[\epsilon],\tilde M_1[\epsilon,B^i[\epsilon],B^o[\epsilon]]](x)}\\
\nonumber
&&
=\epsilon^{n-2}\int_{\partial\Omega^i}S_n(x -  \epsilon y)\tilde M^i_1[\epsilon,B^i[\epsilon],B^o[\epsilon]](y)\,d\sigma_y\\
\nonumber
&&
\
+\int_{\partial\Omega^o}S_n( x - y)\tilde M^o_1[\epsilon,B^i[\epsilon],B^o[\epsilon]](y)\,d\sigma_y-B^o[\epsilon](x)=0\quad\forall x\in\partial\Omega^o\,.
\end{eqnarray} 
Then, by \eqref{110304_eqn1m} and \eqref{110304_eqn2m} we deduce that 
\begin{equation}\label{110731_eqn1}
\int_{\partial\Omega^i}S_n(x -  \epsilon y)\tilde M^i_1[\epsilon,B^i[\epsilon],B^o[\epsilon]](y)\,d\sigma_y=0\quad\forall x\in\partial\Omega^o\,,\ \epsilon\in]-\tilde\epsilon_m,0[\,.
\end{equation}
Now let $\epsilon \in ]-\tilde\epsilon_m,0[$. Let $v^\#_\epsilon$ be the function from $\mathbb{R}^n\setminus\epsilon\Omega^i$ to $\mathbb{R}$ defined by
\[
v^\#_\epsilon(x)\equiv  \int_{\partial \Omega^i}S_n(x-\epsilon y)\tilde M^i_1[\epsilon,B^i[\epsilon],B^o[\epsilon]](y)\,d\sigma_y\qquad\forall x\in\mathbb{R}^n\setminus\epsilon\Omega^i\,.
\]
Then we have $\Delta v^\#_\epsilon(x)=0$ for all $x\in\mathbb{R}^n\setminus\mathrm{cl}\Omega^o$ and equality \eqref{110731_eqn1} implies that $v^\#_{\epsilon}(x)=0$ for all $x\in\partial\Omega^o$. Moreover, by the decay properties at infinity of $S_n$ we have $\lim_{|x|\to \infty}v^\#_\epsilon(x)=0$. Thus $v^\#_{\epsilon|\mathbb{R}^n\setminus\mathrm{cl}\Omega^o}$ coincides with the unique solution of the exterior homogeneous Dirichlet  problem in $\mathbb{R}^n\setminus\mathrm{cl}\Omega^o$ which vanishes at infinity.  Accordingly $v^\#_\epsilon(x)=0$ for all $x\in\mathbb{R}^n\setminus\mathrm{cl}\Omega^o$. We now observe that $\Delta v^\#_\epsilon(x)=0$ for all $x\in\mathbb{R}^n\setminus\epsilon\mathrm{cl}\Omega^i$. Thus, by the Identity Principle for real analytic functions we have $v^\#_\epsilon(x)=0$ for all $x\in\mathbb{R}^n \setminus \epsilon \Omega^i$. In particular, $v^\#_\epsilon(\epsilon x)=0$ for all $x\in\partial\Omega^i$. Then by a straightforward calculation we deduce the validity of \eqref{110802_eqn1} in case (iv) with $\tilde\epsilon_*\equiv\tilde\epsilon_m$.

Hence, the equality in \eqref{110802_eqn1} holds both in case (iii) and (iv) with $\tilde\epsilon_*\in]0,\tilde\epsilon]$. Then Lemma~\ref{homeo} implies that $\tilde M^i_1[\epsilon,B^i[\epsilon],B^o[\epsilon]]=0$ for all $\epsilon\in]-\tilde \epsilon_*,0[$. Thus,  by a standard argument based on the Identity Principle for real analytic functions we deduce that
\begin{equation}\label{110802_eqn2}
\tilde M^i_1[\epsilon,B^i[\epsilon],B^o[\epsilon]]=0\qquad\forall \epsilon\in]-\tilde \epsilon_*,\tilde \epsilon_*[\,.
\end{equation}

We now observe that the equality in \eqref{110802_eqn2} implies that 
\begin{equation}\label{110606_eqn3}
\Lambda_\theta[\epsilon,B^i[\epsilon],B^o[\epsilon],0,\tilde M^o_1[\epsilon,B^i[\epsilon],B^o[\epsilon]]]=(0,0)\quad\forall  \epsilon\in]-\tilde \epsilon_*,\tilde \epsilon_*[\,,\ \theta\in\{-1,1\}\,.
\end{equation}
Let $M_+$ and $M_-$ be as in Proposition~\ref{Modd}. Then by equality \eqref{110606_eqn3}, and by Lemma~\ref{homeo}, and by  Propositions~\ref{L}, \ref{Modd}, and by a standard argument based on the Identity Principle for real analytic functions we verify that $M_+^i[\epsilon,B^i[\epsilon],B^o[\epsilon]]=0$ for all $\epsilon\in]0,\epsilon_1[$, and that $M_-^i[\epsilon,B^i[\epsilon],B^o[\epsilon]]=0$ for all $\epsilon\in]-\epsilon_1,0[$, and that $M_+^o[\epsilon,B^i[\epsilon],B^o[\epsilon]]=\tilde M_1^o[\epsilon,B^i[\epsilon],B^o[\epsilon]]$ for $\epsilon\in]0,\tilde\epsilon_*[$, and that $M_-^o[\epsilon,B^i[\epsilon],B^o[\epsilon]]=\tilde M_1^o[\epsilon,B^i[\epsilon],B^o[\epsilon]]$ for $\epsilon\in]-\tilde\epsilon_*,0[$. So, if we set
\[
m^o[\epsilon]\equiv
\left\{
\begin{array}{ll}
M^o_+[\epsilon,B^i[\epsilon],B^o[\epsilon]]&\text{if }\epsilon\in[\tilde\epsilon_*,\epsilon_1[\,,\\
\tilde M^o_1[\epsilon,B^i[\epsilon],B^o[\epsilon]]&\text{if }\epsilon\in]-\tilde\epsilon_*,\tilde\epsilon_*[\,,\\
M^o_-[\epsilon,B^i[\epsilon],B^o[\epsilon]]&\text{if }\epsilon\in]-\epsilon_1,-\tilde\epsilon_*]\,
\end{array}
\right.
\] and we define
\[
w_\epsilon(x)\equiv \int_{\partial\Omega^o}S_n(x - y)m^o[\epsilon](y)\,d\sigma_y\quad\forall x\in\mathrm{cl}\Omega^o\,,\ \epsilon\in]-\epsilon_1,\epsilon_1[\,,
\] then $\{w_\epsilon\}_{\epsilon\in]-\epsilon_1,\epsilon_1[}$ satisfies the conditions in (c0), (c1) and $v_\epsilon=w_{\epsilon|\mathrm{cl}\Omega(\epsilon)}$ for all $\epsilon\in]-\epsilon_1,\epsilon_1[$ (see also Propositions~\ref{L} and \ref{L0}.) The validity of the Theorem is now verified.
\end{proof}

\medskip

We now show that in Theorem~\ref{odd} it is necessary to require the validity of condition (iii) or of condition (iv). To do so, we construct for $n$ odd a family of functions $\{v_\epsilon\}_{\epsilon\in]-\epsilon_1,\epsilon_1[}$ which satisfies the conditions in (b0), (i), (ii) but not the conditions in (iii) and (iv) (see Example~\ref{ex1} here below.) In particular, for such a family it is not possible to find  $\{w_\epsilon\}_{\epsilon\in]-\epsilon_1,\epsilon_1[}$ which satisfies the conditions in (c0), (c1) and such that $v_\epsilon=w_{\epsilon|\mathrm{cl}\Omega(\epsilon)}$ for all $\epsilon\in]-\epsilon_1,\epsilon_1[$.

\begin{exam}\label{ex1}
Assume that the dimension $n$ is odd. Assume that $\Omega^o$ and $\Omega^i$ coincide with the set $\{x\in\mathbb{R}^n\,:\,|x|<1\}$. Let  $\Omega(\epsilon)\equiv\{x\in\mathbb{R}^n\,:\,|\epsilon|<|x|<1\}$ for all $\epsilon\in]-1,1[$. Let $v_\epsilon$ be the function from $\mathrm{cl}\Omega(\epsilon)$ to $\mathbb{R}$ defined by
\[
v_\epsilon(x)\equiv \frac{\epsilon\,|\epsilon|^{n-2}}{1-|\epsilon|^{n-2}}\left(|x|^{2-n}-1\right)\qquad\forall x\in\mathrm{cl}\Omega(\epsilon)
\]
for all $\epsilon\in]-1,1[\setminus \{0\}$. Let $v_0(x)\equiv 0$ for all $x \in \mathrm{cl}\Omega^o$. 
Then $\{v_\epsilon\}_{\epsilon\in]-1,1[}$ satisfies the condition in (b0) and the conditions in (i), (ii) of Theorem~\ref{odd} but not the conditions in (iii) and (iv).
\end{exam}
\begin{proof} Let $\epsilon\in]-1,1[\setminus\{0\}$. Then $v_\epsilon\in C^{1,\alpha}(\mathrm{cl}\Omega(\epsilon))$ and we have $\Delta v_\epsilon=0$ in $\Omega(\epsilon)$. Further $v_{\epsilon}(x)=0$ if $|x|=1$, and $v_{\epsilon}(x)=\epsilon$ if $|x|=|\epsilon|$ for all $\epsilon\in]-1,1[$. Thus $\{v_\epsilon\}_{\epsilon\in]-1,1[}$ satisfies the condition in (b0) and the conditions in (i), (ii) of Theorem~\ref{odd}. Now let $x_0$ be a point of $\mathbb{R}^n$ with $0<|x_0|<1$. We show that the map which takes $\epsilon$ to $v_\epsilon(x_0)$ is not real analytic in a neighborhood of $\epsilon=0$. In particular $\{v_\epsilon\}_{\epsilon\in]-1,1[}$ does not satisfy the condition in (iii).  To do so, we prove that the map which takes $\epsilon$ to ${\epsilon\,|\epsilon|^{n-2}}/({1-|\epsilon|^{n-2}})$ is not in $C^{n-1}$ for $\epsilon$  in a neighborhood of $0$. We note that
\[
\frac{\epsilon\,|\epsilon|^{n-2}}{1-|\epsilon|^{n-2}}={\epsilon\,|\epsilon|^{n-2}}\,\psi_1(\epsilon)+\psi_2(\epsilon)\qquad\forall\epsilon\in]-1,1[
\] with $\psi_1(\epsilon)\equiv ({1-\epsilon^{2(n-2)}})^{-1}$ and $\psi_2(\epsilon)\equiv \epsilon^{2(n-2)+1}({1-\epsilon^{2(n-2)}})^{-1}$. The maps $\psi_1$ and $\psi_2$ are real analytic from $]-1,1[$ to $\mathbb{R}$ and we have $\psi_1(0)=1$. We observe that $(\frac{d}{d\epsilon})^{(n-1)}(\epsilon|\epsilon|^{n-2})=(n-1)!\,\mathrm{sgn}\,\epsilon$. Then we deduce that  
\begin{equation}\label{ex1_eqn1}
\biggl(\frac{d}{d\epsilon}\biggr)^{(n-1)}\left(\frac{\epsilon\,|\epsilon|^{n-2}}{1-|\epsilon|^{n-2}}\right)={(n-1)!\,(\mathrm{sgn}\,{\epsilon})\,\psi_1(\epsilon)}+\psi_3(\epsilon)\qquad\forall\epsilon\in]-1,1[\setminus\{0\}\,,
\end{equation} where $\psi_3$ is a continuous map from $]-1,1[$ to $\mathbb{R}$. The function on the right hand side of \eqref{ex1_eqn1} has no continuous extension on $]-1,1[$ and our proof is complete. The proof that $\{v_\epsilon\}_{\epsilon\in]-1,1[}$ does not satisfy (iv) is similar and is accordingly omitted. 
\end{proof}

\medskip

We show in the following Example~\ref{ex2}  that analogs of Theorem~\ref{even} and statement (j) do not hold if we replace the assumption that $u_{\epsilon}$ is harmonic on $\Omega(\epsilon)$ for $\epsilon\in]0,\epsilon_1[$ with the weaker assumption that $u_{\epsilon}$ is real analytic on $\Omega(\epsilon)$. Similarly, we show in Example~\ref{ex3} that analogs of Theorem~\ref{odd} and statement (jj) are not true if we replace the assumption that $v_{\epsilon}$ and $w_\epsilon$ are harmonic on $\Omega(\epsilon)$ and $\Omega^o$, respectively, with the weaker assumption that $v_{\epsilon}$ and $w_\epsilon$ are real analytic on $\Omega(\epsilon)$ and $\Omega^o$, respectively. 

\begin{exam}\label{ex2} Let $\Omega^o$ and $\Omega^i$ be equal to $\{x\in\mathbb{R}^n\,:\,|x|<1\}$. Let  $\Omega(\epsilon)\equiv\{x\in\mathbb{R}^n\,:\,|\epsilon|<|x|<1\}$ for all $\epsilon\in]-1,1[$. Let $u_\epsilon$ be the function of $C^{1,\alpha}(\mathrm{cl}\Omega(\epsilon))$ defined by 
\[
u_\epsilon(x)\equiv|x|\qquad\forall x\in\mathrm{cl}\Omega(\epsilon)\,,\ \epsilon\in]0,1[\,.
\] Then $u_\epsilon$ is real analytic on $\Omega(\epsilon)$ for all $\epsilon\in]0,1[$ and the family 
$\{u_\epsilon\}_{\epsilon\in]0,1[}$ satisfies the conditions in (a1), (a2), but there exists no family of functions $\{v_\epsilon\}_{\epsilon\in]-1,1[}$ on $\Omega(\epsilon)$ which satisfies the conditions in (b1), (b2) and such that $u_\epsilon=v_\epsilon$ for all $\epsilon\in]0,1[$.
\end{exam} 
\begin{proof} Clearly $u_\epsilon$ belongs to $C^{1,\alpha}(\mathrm{cl}\Omega(\epsilon))$ and is real analytic on $\Omega(\epsilon)$ for all $\epsilon\in]0,1[$. Moreover, a straightforward calculation shows that $\{u_\epsilon\}_{\epsilon\in]0,1[}$ satisfies the conditions in (a1), (a2). Assume by contradiction that there exists a family $\{v_\epsilon\}_{\epsilon\in]-1,1[}$ of functions on $\Omega(\epsilon)$ which satisfies the conditions in (b1), (b2) and such that $u_\epsilon=v_\epsilon$ for all $\epsilon\in]0,1[$. Then condition (b1) and the Identity Principle for real analytic maps imply that we have $v_{\epsilon}(x)=|x|$ for all $x\in\mathbb{R}^n$ with $1/2\leq |x|\le 1$ and for all $\epsilon\in]-1/2,1/2[$. Condition (b2) and the Identity Principle for real analytic maps imply that we have $v_{\epsilon}(\epsilon x)=\epsilon|x|$ for all $x\in\mathbb{R}^n$ with $1\le|x|\leq 2$ and for all $\epsilon\in]-1/2,1/2[\setminus\{0\}$. Let $\epsilon^*\in]-1/2,-1/4[$. Let $x^*\in\mathbb{R}^n$ with $1/2<|x^*|<2|\epsilon^*|$. So that $1<|x^*/\epsilon^*|<2$. Then $v_{\epsilon^*}(x^*)=|x^*|$ and $v_{\epsilon^*}(x^*)=v_{\epsilon^*}(\epsilon^*(x^*/\epsilon^*))=\epsilon^*|x^*/\epsilon^*|=-|x^*|$. A contradiction. 
\end{proof}

\begin{exam}\label{ex3} Let $\Omega^o$ and $\Omega^i$ be equal to $\{x\in\mathbb{R}^n\,:\,|x|<1\}$. Let  $\Omega(\epsilon)\equiv\{x\in\mathbb{R}^n\,:\,|\epsilon|<|x|<1\}$ for all $\epsilon\in]-1,1[$. Let $v_\epsilon$ be the function of $C^{1,\alpha}(\mathrm{cl}\Omega(\epsilon))$ defined by 
\[
v_\epsilon(x)\equiv\epsilon^2/|x|^2
\qquad\forall x\in\mathrm{cl}\Omega(\epsilon)\,,\ \epsilon\in]-1,1[\setminus\{0\}\,.
\] Let $v_0(x)\equiv 0$ for all $x\in\mathrm{cl}\Omega^o$. Then $v_0$ is real analytic on $\Omega^o$, and $v_\epsilon$ is real analytic on $\Omega(\epsilon)$ for all $\epsilon\in]-1,1[\setminus\{0\}$, and the family 
$\{v_\epsilon\}_{\epsilon\in]-1,1[}$ satisfies the conditions in (b1), (b2), but for any fixed $\epsilon^*\in]-1,1[\setminus\{0\}$ there exists no function $w_{\epsilon^*}$ real analytic on $\Omega^o$ which satisfies the equality $v_{\epsilon^*}=w_{\epsilon^*|\mathrm{cl}\Omega(\epsilon^*)}$.
\end{exam} 
\begin{proof} Clearly $v_\epsilon$ belongs to $C^{1,\alpha}(\mathrm{cl}\Omega(\epsilon))$ and is real analytic on $\Omega(\epsilon)$ for all $\epsilon\in]-1,1[\setminus\{0\}$. Moreover, a straightforward calculation that $\{v_\epsilon\}_{\epsilon\in]-1,1[}$ satisfies the conditions in (b1), (b2). Now let $\epsilon^*\in]-1,1[\setminus\{0\}$. Let $\tilde w_{\epsilon^*}$ be a real analytic map on $ \Omega^o\setminus\{0\}$ such that $v_{\epsilon^*}=\tilde w_{\epsilon^*|\mathrm{cl}\Omega(\epsilon^*)}$. By the Identity Principle for real analytic maps we deduce that $\tilde w_{\epsilon^*}(x)=(\epsilon^*)^2/|x|^2$ for all $x\in\Omega^o\setminus\{0\}$. Thus $\tilde w_{\epsilon^*}$ has no continuous extension on $ \Omega^o$ and the validity of the statement follows.
\end{proof}

\section{Some particular cases}\label{appl}

In this section we consider   some particular cases and we show some consequences of Theorems~\ref{even} and \ref{odd}. In the following Proposition~\ref{right} we show that the family $\{u_\epsilon\}_{\epsilon\in]0,\epsilon_0[}$ of the solutions of the boundary value problem in \eqref{dir1} satisfies the conditions in (a1) and (a2) for some $\epsilon_1\in]0,\epsilon_0]$.

\begin{prop}\label{right}
Let $\Omega^i$, $\Omega^o$ be as in \eqref{e1}. Let $\epsilon_0$ be as in \eqref{e2}. Let $(f^i,f^o)\in C^{1,\alpha}(\partial\Omega^i)\times C^{1,\alpha}(\partial\Omega^o)$. Let $u_\epsilon$ denote the unique solution in $C^{1,\alpha}(\mathrm{cl}\Omega(\epsilon))$ of the boundary value problem in \eqref{dir1} for all $\epsilon\in]0,\epsilon_0[$. Then there exists $\epsilon_1\in]0,\epsilon_0]$ such that the family $\{u_\epsilon\}_{\epsilon\in]0,\epsilon_0[}$ satisfies the conditions in (a1) and (a2). If the dimension $n$ is even, then we can take $\epsilon_1=\epsilon_0$.
\end{prop}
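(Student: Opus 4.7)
The plan is to construct the families of real analytic extensions $U_M$ and $U_m$ by means of the integral representation provided by Proposition~\ref{L}, using the real analytic density maps supplied by Proposition~\ref{Mr} (for general $n$) and Proposition~\ref{Meven} (for even $n$). For general $n$, I would apply Proposition~\ref{Mr} with $\theta=1$, $\tilde f^i\equiv f^i$, $\tilde f^o\equiv f^o$ to obtain a real number $\tilde\epsilon\in]0,\epsilon_0[$ and a real analytic operator $\tilde M_1\equiv(\tilde M_1^i,\tilde M_1^o)$ defined on $]-\tilde\epsilon,\tilde\epsilon[\,\times\mathcal{U}$, and set $\epsilon_1\equiv\tilde\epsilon$. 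For $n$ even, I would instead take the globally defined real analytic map $M\equiv(M^i,M^o)$ from Proposition~\ref{Meven} and set $\epsilon_1\equiv\epsilon_0$. In both cases, denote by $\mu^i_\epsilon$ and $\mu^o_\epsilon$ the real analytic families of densities obtained by evaluating $\tilde M_1[\epsilon,f^i,f^o]$, respectively $M[\epsilon,f^i,f^o]$, on the interval $]-\epsilon_1,\epsilon_1[$.

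Next, to verify (a1), I would fix $\Omega_M$ and $\epsilon_M\in]0,\epsilon_1]$ satisfying the geometric hypothesis, and define
\[
U_M[\epsilon](x)\equiv\epsilon^{n-2}\int_{\partial\Omega^i}S_n(x-\epsilon y)\mu^i_\epsilon(y)\,d\sigma_y+\int_{\partial\Omega^o}S_n(x-y)\mu^o_\epsilon(y)\,d\sigma_y
\]
for all $x\in\mathrm{cl}\Omega_M$ and $\epsilon\in]-\epsilon_M,\epsilon_M[$. The real analyticity of $\epsilon\mapsto U_M[\epsilon]$ as a map into $C^{1,\alpha}(\mathrm{cl}\Omega_M)$ can be shown with exactly the argument used in the proof of Theorem~\ref{even} for the map $V_M$: localize near a fixed $\epsilon^*\in]-\epsilon_M,\epsilon_M[$, interpose an auxiliary $C^{1,\alpha}$-domain $\tilde\Omega$ with $\Omega_M\subseteq\tilde\Omega\subseteq\Omega^o$ and $\mathrm{cl}\tilde\Omega\cap\epsilon\mathrm{cl}\Omega^i=\emptyset$ for $\epsilon$ in a small interval around $\epsilon^*$ (which exists by a Sard--Partition of Unity argument and continuity of the distance), apply Lemma~\ref{anal}(ii) to the single layer on $\partial\Omega^i$ with target $\mathrm{cl}\tilde\Omega$, invoke Lemma~\ref{homeo} for the single layer on $\partial\Omega^o$, and then compose with the continuous restriction operator into $C^{1,\alpha}(\mathrm{cl}\Omega_M)$.

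For (a2), I would define
\[
U_m[\epsilon](x)\equiv\int_{\partial\Omega^i}S_n(x-y)\mu^i_\epsilon(y)\,d\sigma_y+\int_{\partial\Omega^o}S_n(\epsilon x-y)\mu^o_\epsilon(y)\,d\sigma_y
\]
for $x\in\mathrm{cl}\Omega_m$ and $\epsilon\in]-\epsilon_m,\epsilon_m[$, and prove its real analyticity by the same localization technique used in the proof of Theorem~\ref{even} for $V_m$, namely by introducing an intermediate $C^{1,\alpha}$-domain $\tilde\Omega\subseteq\mathbb{R}^n\setminus\mathrm{cl}\Omega^i$ containing $\Omega_m$ with $\epsilon\mathrm{cl}\tilde\Omega\subseteq\Omega^o$. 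Finally, the identities $u_{\epsilon|\mathrm{cl}\Omega_M}=U_M[\epsilon]$ for $\epsilon\in]0,\epsilon_M[$ and $u_\epsilon(\epsilon\,\cdot\,)_{|\mathrm{cl}\Omega_m}=U_m[\epsilon]$ for $\epsilon\in]0,\epsilon_m[$ follow immediately from the integral representation in Proposition~\ref{L} (noting that $\theta=(\mathrm{sgn}\,\epsilon)^n=1$ for $\epsilon>0$) together with the uniqueness of the solution of \eqref{dir1}.

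The main conceptual point, rather than a technical obstacle, is that for general $n$ one only has local (in $\epsilon$) real analyticity of the density map near $\epsilon=0$, which forces $\epsilon_1$ to be possibly smaller than $\epsilon_0$; for $n$ even Proposition~\ref{Meven} promotes this to a global real analytic continuation on all of $]-\epsilon_0,\epsilon_0[$, so no shrinking is necessary and one may take $\epsilon_1=\epsilon_0$. The analytic verification in both (a1) and (a2) is essentially a transcription of the corresponding arguments already carried out in the proof of Theorem~\ref{even}.
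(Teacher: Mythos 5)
Your proposal is correct and follows essentially the same route as the paper: for general $n$ it uses the local implicit-function-theorem map $\tilde M_1$ from Proposition~\ref{Mr} with $\theta=1$ and sets $\epsilon_1\equiv\tilde\epsilon$, defines $U_M$ and $U_m$ by the integral representation of Proposition~\ref{L}, and verifies real analyticity by the localization argument already carried out for $V_M$ and $V_m$ in the proof of Theorem~\ref{even}. The only cosmetic difference is that for $n$ even the paper simply invokes Theorem~\ref{even} with $B^i[\epsilon]\equiv f^i$, $B^o[\epsilon]\equiv f^o$, whereas you inline the same construction via Proposition~\ref{Meven}; the underlying argument is identical.
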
 
\begin{proof} If the dimension $n$ is even, then the validity of the Proposition follows by Theorem~\ref{even} with $\epsilon_1\equiv\epsilon_0$ and $B^i[\epsilon]\equiv f^i$, $B^o[\epsilon]\equiv f^o$  for all $\epsilon\in]-\epsilon_1,\epsilon_1[$. So let $n$ be odd.  Let $\tilde M_1\equiv(\tilde M^i_1,\tilde M^o_1)$, $\tilde\epsilon$, $\mathcal{U}$ be as in Propositions~\ref{Mr} with $\theta\equiv 1$, $\tilde f^i\equiv f^i$ and $\tilde f^o\equiv f^o$. We set $\epsilon_1\equiv \tilde\epsilon$.  Let $\Omega_M$ and $\epsilon_M$ be as in (a1). Let $\tilde v_\epsilon$ be defined as in \eqref{110304_eqn3}, \eqref{110606_eqn1} with $B^i[\epsilon]\equiv f^i$ and $B^o[\epsilon]\equiv f^o$ for all $\epsilon\in]-\tilde\epsilon,\tilde\epsilon[$. Then we set $U_M[\epsilon]\equiv\tilde v_{\epsilon|\mathrm{cl}\Omega_M}$ for all $\epsilon\in]-\epsilon_M,\epsilon_M[$. Then we show that $U_M$ is real analytic from $]-\epsilon_M,\epsilon_M[$ to $C^{1,\alpha}(\mathrm{cl}\Omega_M)$ (see also the argument exploited in the proof of Theorem~\ref{even} for $V_M$.)   The validity of (a1) is thus proved. Now let $\Omega_m$ and $\epsilon_m$ be as in (a2). Let $U_m[\epsilon]$ be defined by
\begin{eqnarray*}
\lefteqn{U_m[\epsilon](x)\equiv \int_{\partial\Omega^i}S_n(x -  y)\tilde M^i_1[\epsilon,f^i,f^o](y)\,d\sigma_y}\\
&&
\qquad\quad 
+\int_{\partial\Omega^o}S_n(\epsilon x - y)\tilde M^o_1[\epsilon,f^i,f^o](y)\,d\sigma_y\qquad\forall x\in\mathrm{cl}\Omega_m
\end{eqnarray*} for all $\epsilon\in]-\epsilon_m,\epsilon_m[$. Clearly,
\[
u_\epsilon(\epsilon x)=U_m[\epsilon](x) \qquad \forall x \in \mathrm{cl}\Omega_m
\]
for all $\epsilon \in ]0,\epsilon_m[$. We verify that $U_m$ is real analytic from $]-\epsilon_m,\epsilon_m[$ to $C^{1,\alpha}(\mathrm{cl}\Omega_m)$ (see also the argument exploited in the proof of Theorem~\ref{even} for $V_m$.)  Accordingly the validity of (a2) follows.
 \end{proof}\medskip

In the following Proposition~\ref{sym} we assume that $n$ is even and we consider a family $\{u_\epsilon\}_{\epsilon\in]0,\epsilon_1[}$ of harmonic functions on $\Omega(\epsilon)$ which satisfies the conditions in Theorem~\ref{even}. Then we investigate the power series that describe $u_{\epsilon|\mathrm{cl}\Omega_M}$ and $u_\epsilon(\epsilon\,\cdot\,)_{|\mathrm{cl}\Omega_m}$ for $\epsilon$ small and positive under suitable symmetry assumptions on $B^i$, $B^o$, $\Omega^i$ and $\Omega^o$.

\begin{prop}\label{sym}
Assume that $n$ is even. Let $\Omega^i$, $\Omega^o$ be as in \eqref{e1}.  Let $\epsilon_0$ be as in \eqref{e2}.  Let $\epsilon_1\in]0,\epsilon_0]$. Let $\{u_\epsilon\}_{\epsilon\in]0,\epsilon_1[}$, $B^i$ and $B^o$ be as in Theorem~\ref{even}.    Let $\Omega_M$, $\epsilon_M$ be as in (b1). Let $\Omega_m$, $\epsilon_m$ be as in (b2). Let $\zeta\in\{-1,1\}$.  Then the following statements hold.  
\begin{enumerate}
\item[(i)] If $\Omega^i=-\Omega^i$ and \[
B^i[\epsilon](x)=\zeta B^i[-\epsilon](-x)\,,\quad B^o[\epsilon](y)=\zeta B^o[-\epsilon](y)
\] for all $x\in\partial\Omega^i$, $y\in\partial\Omega^o$, $\epsilon\in]-\epsilon_1,\epsilon_1[$, then there exist $\tilde\epsilon_M\in]0,\epsilon_M[$ and a sequence  $\{u_{M,j}\}_{j\in\mathbb{N}}$ in $C^{1,\alpha}(\mathrm{cl}\Omega_M)$ such that 
\[
u_{\epsilon|\mathrm{cl}\Omega_M}=\epsilon^{(1-\zeta)/2}\sum_{j=0}^\infty u_{M,j}\,\epsilon^{2j}\qquad\forall \epsilon\in]0,\tilde\epsilon_M[\,,
\] where the series converges in $C^{1,\alpha}(\mathrm{cl}\Omega_M)$.
\item[(ii)] If $\Omega^o=-\Omega^o$ and \[
B^i[\epsilon](x)=\zeta B^i[-\epsilon](x)\,,\quad B^o[\epsilon](y)=\zeta B^o[-\epsilon](-y)
\] for all $x\in\partial\Omega^i$, $y\in\partial\Omega^o$, $\epsilon\in]-\epsilon_1,\epsilon_1[$, then there exist $\tilde\epsilon_m\in]0,\epsilon_m[$ and  a sequence   $\{u_{m,j}\}_{j\in\mathbb{N}}$ in $C^{1,\alpha}(\mathrm{cl}\Omega_m)$ such that 
\[
u_{\epsilon}(\epsilon\,\cdot\,)_{|\mathrm{cl}\Omega_m}=\epsilon^{(1-\zeta)/2}\sum_{j=0}^\infty u_{m,j}\,\epsilon^{2j}\qquad\forall \epsilon\in]0,\tilde\epsilon_m[\,,
\] where the series converges in  $C^{1,\alpha}(\mathrm{cl}\Omega_m)$.
\end{enumerate}
\end{prop}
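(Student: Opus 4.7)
The plan is to use Theorem~\ref{even} to extend $\{u_\epsilon\}_{\epsilon\in]0,\epsilon_1[}$ to a family $\{v_\epsilon\}_{\epsilon\in]-\epsilon_1,\epsilon_1[}$ satisfying (b0)--(b2), so that in particular the real analytic operators $V_M$ and $V_m$ of (b1) and (b2) are available.  Note that for every $\epsilon\in ]-\epsilon_1,\epsilon_1[\setminus\{0\}$ the function $v_\epsilon$ is by construction the unique $C^{1,\alpha}(\mathrm{cl}\Omega(\epsilon))$ solution of the Dirichlet problem with trace $B^o[\epsilon]$ on $\partial\Omega^o$ and satisfying $v_\epsilon(\epsilon x)=B^i[\epsilon](x)$ for all $x\in\partial\Omega^i$ (cf.\ Propositions~\ref{L} and \ref{Meven}).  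I would then translate the two symmetry hypotheses into the parity identities $V_M[-\epsilon]=\zeta V_M[\epsilon]$ and $V_m[-\epsilon]=\zeta V_m[\epsilon]$ respectively, and finally read off the stated series from the Taylor coefficients at the origin.

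For statement (i), the assumption $\Omega^i=-\Omega^i$ yields $\Omega(-\epsilon)=\Omega(\epsilon)$, so both $v_\epsilon$ and $v_{-\epsilon}$ are harmonic on the same perforated domain.  On $\partial\Omega^o$ the hypothesis $B^o[\epsilon]=\zeta B^o[-\epsilon]$ gives $v_{-\epsilon}=\zeta v_\epsilon$ at once.  On $\epsilon\partial\Omega^i$, every point $y$ can be written as $(-\epsilon)z$ with $z=-y/\epsilon\in\partial\Omega^i$ thanks to the symmetry of $\Omega^i$, and the identity $B^i[\epsilon](y/\epsilon)=\zeta B^i[-\epsilon](-y/\epsilon)$ then forces $v_\epsilon(y)=\zeta v_{-\epsilon}(y)$.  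Uniqueness for the Dirichlet problem on $\Omega(\epsilon)$ yields $v_{-\epsilon}=\zeta v_\epsilon$ throughout $\mathrm{cl}\Omega(\epsilon)$ for $\epsilon\in ]0,\epsilon_1[$, hence $V_M[-\epsilon]=\zeta V_M[\epsilon]$ on $]0,\epsilon_M[$, and the Identity Principle for real analytic maps extends the equality to all of $]-\epsilon_M,\epsilon_M[$.

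For statement (ii), the assumption $\Omega^o=-\Omega^o$ ensures that the reflection $x\mapsto -x$ is a bijection between $\Omega(\epsilon)$ and $\Omega(-\epsilon)$, so that $\tilde v_\epsilon(x)\equiv\zeta v_{-\epsilon}(-x)$ is well-defined and harmonic on $\Omega(\epsilon)$.  A direct check using the symmetries $B^o[\epsilon](y)=\zeta B^o[-\epsilon](-y)$ and $B^i[\epsilon](x)=\zeta B^i[-\epsilon](x)$ shows that $\tilde v_\epsilon$ has the same boundary values as $v_\epsilon$ on both components of $\partial\Omega(\epsilon)$; uniqueness then gives $v_\epsilon(x)=\zeta v_{-\epsilon}(-x)$ on $\Omega(\epsilon)$.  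Substituting $x=\epsilon y$ with $y\in\mathrm{cl}\Omega_m$ yields $V_m[\epsilon]=\zeta V_m[-\epsilon]$ for $\epsilon\in ]0,\epsilon_m[$, and the Identity Principle delivers the equality on the whole of $]-\epsilon_m,\epsilon_m[$.

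In both cases, writing the convergent Taylor expansion $V_M[\epsilon]=\sum_{k\geq 0}c_k\epsilon^k$ in a ball $]-\tilde\epsilon_M,\tilde\epsilon_M[$ of positive radius centered at the origin (and similarly for $V_m$), the parity identity forces $c_k=\zeta(-1)^k c_k$, so $c_k=0$ whenever $(-1)^k\neq\zeta$.  Relabeling the surviving coefficients as $u_{M,j}$ (respectively $u_{m,j}$) and recalling that $u_\epsilon=v_\epsilon$ for $\epsilon\in ]0,\epsilon_1[$ produces the desired expansions.  The main technical point is the identification of $v_\epsilon$ for $\epsilon<0$ with the solution of a genuine Dirichlet problem so that uniqueness can be invoked; this is exactly what Propositions~\ref{L} and \ref{Meven} provide via the layer-potential representation used in the proof of Theorem~\ref{even}.
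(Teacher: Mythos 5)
Your proposal is correct and follows essentially the same route as the paper: extend $\{u_\epsilon\}$ via Theorem~\ref{even}, use the symmetry hypotheses together with Proposition~\ref{L} and uniqueness for the Dirichlet problem on $\Omega(\epsilon)$ to obtain $v_{-\epsilon}=\zeta v_\epsilon$ (resp.\ $v_\epsilon(x)=\zeta v_{-\epsilon}(-x)$), and then kill the Taylor coefficients of the wrong parity. The only cosmetic difference is that you phrase the parity relation at the level of $V_M$, $V_m$ and add a (harmless, slightly redundant) appeal to the Identity Principle, whereas the paper substitutes $-\epsilon$ directly into the convergent power series.
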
 
\begin{proof}
Let $\{v_\epsilon\}_{\epsilon\in]-\epsilon_1,\epsilon_1[}$ be as in Theorem~\ref{even}. Then $\{v_\epsilon\}_{\epsilon\in]-\epsilon_1,\epsilon_1[}$ satisfies the conditions in (b1), (b2) and we deduce that there exist  $\tilde\epsilon_M\in]0,\epsilon_M[$, $\tilde\epsilon_m\in]0,\epsilon_m[$ and sequences  $\{v_{M,j}\}_{j\in\mathbb{N}}$ in $C^{1,\alpha}(\mathrm{cl}\Omega_M)$ and $\{v_{m,j}\}_{j\in\mathbb{N}}$ in $C^{1,\alpha}(\mathrm{cl}\Omega_m)$ such that 
\begin{eqnarray*}
v_{\epsilon|\mathrm{cl}\Omega_M}=&\sum_{j=0}^\infty v_{M,j}\,\epsilon^{j}& \forall \epsilon\in]-\tilde\epsilon_M,\tilde\epsilon_M[\,,\\ v_{\epsilon}(\epsilon\,\cdot\,)_{|\mathrm{cl}\Omega_m}=&\sum_{j=0}^\infty v_{m,j}\,\epsilon^{j}& \forall \epsilon\in]-\tilde\epsilon_m,\tilde\epsilon_m[\setminus\{0\}\,,
\end{eqnarray*} where the  first and second series converge in $C^{1,\alpha}(\mathrm{cl}\Omega_M)$ and $C^{1,\alpha}(\mathrm{cl}\Omega_m)$, respectively. Then, by the assumptions in (i) and by Proposition~\ref{L}, and by the uniqueness of the solution of the Dirichlet problem in $\Omega(\epsilon)$ for all $\epsilon\in]-\tilde\epsilon_M,\tilde\epsilon_M[\setminus\{0\}$,  we deduce that $\Omega(\epsilon)=\Omega(-\epsilon)$ and that $v_\epsilon=\zeta v_{-\epsilon}$ for all $\epsilon\in]-\tilde\epsilon_M,\tilde\epsilon_M[\setminus\{0\}$. Thus we have
$\sum_{j=0}^\infty v_{M,j}(-\epsilon)^{j}=\zeta \sum_{j=0}^\infty v_{M,j}\epsilon^{j}$ for all $\epsilon\in]-\tilde\epsilon_M,\tilde\epsilon_M[$, which implies that $v_{M,2j+(1+\zeta)/2}=0$ for all $j\in\mathbb{N}$.  If we now set $u_{M,j}\equiv v_{M,2j+(1-\zeta)/2}$ for all $j\in\mathbb{N}$, then the validity of statement (i) follows. Similarly, by the assumptions in (ii) and by Proposition~\ref{L}, and by the uniqueness of the solution of the Dirichlet problem in $\Omega(\epsilon)$ for $\epsilon\in]-\tilde\epsilon_m,\tilde\epsilon_m[\setminus\{0\}$,  we deduce that $\Omega(\epsilon)=-\Omega(-\epsilon)$ and that $v_\epsilon(x)=\zeta v_{-\epsilon}(-x)$ for all $x\in\mathrm{cl}\Omega(\epsilon)$ and all $\epsilon\in]-\tilde\epsilon_m,\tilde\epsilon_m[\setminus\{0\}$. In particular $v_\epsilon(\epsilon x)=\zeta v_{-\epsilon}(-\epsilon x)$ for all $x\in\mathrm{cl}\Omega_m$ and all $\epsilon\in]-\tilde\epsilon_m,\tilde\epsilon_m[\setminus\{0\}$. We deduce that
$\sum_{j=0}^\infty v_{m,j}(-\epsilon)^{j}=\zeta \sum_{j=0}^\infty v_{m,j}\epsilon^{j}$ for all $\epsilon\in]-\tilde\epsilon_m,\tilde\epsilon_m[$, which in turn implies that $v_{m,2j+(1+\zeta)/2}=0$ for all $j\in\mathbb{N}$.  If we now set $u_{m,j}\equiv v_{m,2j+(1-\zeta)/2}$ for all $j\in\mathbb{N}$, then the validity of statement (ii) follows. 
\end{proof}\medskip

Now let $n$ be odd. Let $\{u_\epsilon\}_{\epsilon\in]0,\epsilon_0[}$ denote the family of the solutions of \eqref{dir1}. As an immediate consequence of the following Proposition~\ref{const} one can verify that the equalities in \eqref{uUM} and \eqref{uUm} hold for $\epsilon$ negative only if there exists $c\in\mathbb{R}$ such that $u_\epsilon(x)=c$ for all $x\in\mathrm{cl}\Omega(\epsilon)$ and $\epsilon\in]0,\epsilon_0[$.

\begin{prop}\label{const}
Assume that $n$ is odd. Let $\Omega^i$, $\Omega^o$ be as in \eqref{e1}. Let $\epsilon_0$ be as in \eqref{e2}. Let $\epsilon_1\in]0,\epsilon_0]$. Let $\{v_\epsilon\}_{\epsilon\in]-\epsilon_1,\epsilon_1[}$, $B^i$ and $B^o$ be as in Theorem~\ref{odd}. Then the following statements are equivalent.
\begin{enumerate}
\item[(i)] There exist functions $f^i\in C^{1,\alpha}(\partial\Omega^i)$ and $f^o\in C^{1,\alpha}(\partial\Omega^o)$ such that $B^i[\epsilon]=f^i$ and $B^o[\epsilon]=f^o$ for all $\epsilon\in]-\epsilon_1,\epsilon_1[$.
\item[(ii)] There exists a constant $c\in\mathbb{R}$ such that $v_\epsilon(x)=c$ for all $x\in\mathrm{cl}\Omega(\epsilon)$ and all $\epsilon\in]-\epsilon_1,\epsilon_1[$.
\end{enumerate}  
\end{prop}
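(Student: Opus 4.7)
The plan is the following. The implication (ii) $\Rightarrow$ (i) is immediate: if $v_\epsilon\equiv c$ on $\mathrm{cl}\Omega(\epsilon)$ for every $\epsilon\in]-\epsilon_1,\epsilon_1[$, then $B^o[\epsilon](x)=v_\epsilon(x)=c$ on $\partial\Omega^o$ and $B^i[\epsilon](x)=v_\epsilon(\epsilon x)=c$ on $\partial\Omega^i$ for $\epsilon\neq 0$, and the real analyticity (hence continuity) of $B^i$ at the origin extends the latter identity to $\epsilon=0$. Taking $f^i=f^o=c$ gives~(i).

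For the non-trivial direction (i) $\Rightarrow$ (ii) I would first apply Theorem~\ref{odd} (whose hypotheses are in force by assumption) to obtain a real analytic family $\{w_\epsilon\}_{\epsilon\in]-\epsilon_1,\epsilon_1[}$ of harmonic functions on the whole of $\Omega^o$ with $v_\epsilon=w_{\epsilon|\mathrm{cl}\Omega(\epsilon)}$ for all $\epsilon$. On $\partial\Omega^o$ the trace of $w_\epsilon$ equals $B^o[\epsilon]=f^o$, independently of $\epsilon$, so by uniqueness of the classical Dirichlet problem on $\Omega^o$ the family $\{w_\epsilon\}$ must collapse to a single harmonic function $w\in C^{1,\alpha}(\mathrm{cl}\Omega^o)$ that does not depend on $\epsilon$.

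Next I would exploit the inner datum. For $x\in\partial\Omega^i$ and $\epsilon\in]-\epsilon_1,\epsilon_1[\setminus\{0\}$ we have $w(\epsilon x)=v_\epsilon(\epsilon x)=B^i[\epsilon](x)=f^i(x)$. Since $w$ is harmonic on a neighbourhood of $0\in\Omega^o$, there is a ball $B(0,r)\subseteq\Omega^o$ on which $w$ admits a convergent expansion $w(y)=\sum_{k=0}^{\infty}P_k(y)$ in homogeneous harmonic polynomials $P_k$ of degree $k$. Choosing $|\epsilon|$ so small that $\epsilon\,\mathrm{cl}\Omega^i\subseteq B(0,r)$ and matching coefficients of the resulting identity $\sum_{k=0}^{\infty}\epsilon^k P_k(x)=f^i(x)$ viewed as a power series in $\epsilon$, I would deduce $P_0\equiv f^i$ (so $f^i$ is constantly equal to $c\equiv w(0)$) and $P_k(x)=0$ for every $x\in\partial\Omega^i$ and every $k\ge 1$.

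To conclude, I would use that $\Omega^i$ is bounded, open and contains the origin, so every ray issuing from $0$ meets $\partial\Omega^i$ at some point $\lambda(v)v$ with $\lambda(v)>0$; the homogeneity of $P_k$ then gives $P_k(v)=\lambda(v)^{-k}P_k(\lambda(v)v)=0$ for every unit vector $v$, whence $P_k\equiv 0$ for each $k\ge 1$. Therefore $w\equiv c$ on $B(0,r)$, and the real analyticity of $w$ together with the connectedness of $\Omega^o$ upgrades this to $w\equiv c$ on $\mathrm{cl}\Omega^o$; consequently $v_\epsilon\equiv c$ on $\mathrm{cl}\Omega(\epsilon)$ for every $\epsilon\in]-\epsilon_1,\epsilon_1[$. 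The single structural step of the plan is the reduction to a single harmonic function on $\Omega^o$ via Theorem~\ref{odd}; after that obstacle is removed the conclusion is just a direct consequence of the harmonic polynomial expansion at the origin combined with the homogeneity of its terms.
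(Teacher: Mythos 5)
Your proposal is correct, and its structural backbone coincides with the paper's: both implications are handled the same way up to and including the key reduction, namely applying Theorem~\ref{odd} to extend $v_\epsilon$ to a family $\{w_\epsilon\}$ harmonic on all of $\Omega^o$ and then invoking uniqueness for the Dirichlet problem on $\Omega^o$ to collapse the family to a single $\epsilon$-independent harmonic function $w$. Where you diverge is in the endgame. The paper first shows $f^i$ is constant by writing $w$ as a single layer potential and letting $\epsilon\to 0$ in the identity $f^i(x)=\int_{\partial\Omega^o}S_n(\epsilon x-y)\mu^o(y)\,d\sigma_y$ (using the real analyticity from Lemma~\ref{anal}); it then observes that $w$ equals the constant $w(0)$ on $\epsilon^*\partial\Omega^i$ for a fixed $\epsilon^*$, deduces $w\equiv w(0)$ on $\epsilon^*\mathrm{cl}\Omega^i$ from uniqueness of the interior Dirichlet problem (maximum principle), and finishes with the identity principle. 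You instead expand $w$ near the origin in homogeneous harmonic polynomials, match powers of $\epsilon$ in $\sum_k\epsilon^kP_k(x)=f^i(x)$ to get $P_k=0$ on $\partial\Omega^i$ for $k\ge 1$, and kill each $P_k$ by homogeneity together with the fact that every ray from $0$ meets $\partial\Omega^i$. Your route trades the maximum principle and the layer-potential representation for the spherical-harmonics expansion; it is equally rigorous and arguably makes more transparent \emph{why} all the nonconstant Taylor data of $w$ at $0$ must vanish, while the paper's version stays entirely inside the potential-theoretic toolkit already set up in Section~2.
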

\begin{proof} Clearly statement (ii) implies (i). So we have to show that  (i) implies (ii). By Theorem~\ref{odd} there exists a family $\{w_\epsilon\}_{\epsilon\in]-\epsilon_1,\epsilon_1[}$ of harmonic functions on $\Omega^o$ such that $v_\epsilon=w_{\epsilon|\mathrm{cl}\Omega(\epsilon)}$ for all $\epsilon\in]-\epsilon_1,\epsilon_1[$. In particular we have $w_{\epsilon|\partial\Omega^o}=B^o[\epsilon]=f^o$ for all $\epsilon\in]-\epsilon_1,\epsilon_1[$ and $w_{\epsilon}(\epsilon\,\cdot\,)_{|\partial\Omega^i}=B^i[\epsilon]=f^i$ for all $\epsilon\in]-\epsilon_1,\epsilon_1[\setminus\{0\}$. By the uniqueness of the solution of the Dirichlet problem in $\Omega^o$ and by Lemma~\ref{homeo} we deduce that $w_\epsilon=w_0$ for all $\epsilon\in]-\epsilon_1,\epsilon_1[$ and that there exists $\mu^o \in C^{0,\alpha}(\partial \Omega^o)$ such that
\begin{equation}\label{110915eq1}
w_\epsilon (x)=\int_{\partial\Omega^o}S_n(x - y)\mu^o(y)\,d\sigma_y\quad \forall x\in\mathrm{cl}\Omega^o\,,\ \epsilon \in ]-\epsilon_1,\epsilon_1[\,.
\end{equation} We now prove that $f^i$ is constant on  $\partial\Omega^i$. Indeed,  equality $w_{\epsilon}(\epsilon\,\cdot\,)_{|\partial\Omega^i}=f^i$ for all $\epsilon\in]-\epsilon_1,\epsilon_1[\setminus\{0\}$ and \eqref{110915eq1} imply that
\begin{equation}\label{0721eqn2}
f^i(x)=\int_{\partial\Omega^o}S_n(\epsilon x - y)\mu^o(y)\,d\sigma_y\quad \forall x\in\partial \Omega^i\,,\ \epsilon \in ]-\epsilon_1,\epsilon_1[\setminus \{0\}\,.
\end{equation}
Since the map from $]-\epsilon_1,\epsilon_1[$ to $C^{1,\alpha}(\partial \Omega^i)$ which takes $\epsilon$ to the function $\int_{\partial\Omega^o}S_n(\epsilon x - y)\mu^o(y)\,d\sigma_y$ of $ x\in\partial \Omega^i$ is real analytic, we can take the limit as $\epsilon\to 0$ in \eqref{0721eqn2} and we obtain  
\[
f^i(x)=\int_{\partial\Omega^o}S_n(y)\mu^o(y)\,d\sigma_y=w_0(0)\quad \forall x\in\partial \Omega^i
\] (cf.~Lemma~\ref{anal} (i).)
Now let $\epsilon^*\in]0,\epsilon_1[$ be fixed. Then we have $w_{0}(x)=w_{\epsilon^*}(x)=f^i(x/\epsilon^*)=w_0(0)$ for all $x\in\epsilon^*\partial\Omega^i$. Since $w_{0}$ is harmonic in $\epsilon^*\Omega^i$ we deduce that $w_{0}(x)=w_0(0)$ for all $x\in\epsilon^*\mathrm{cl}\Omega^i$. Then, by the Identity Principle for real analytic functions $w_{0}(x)=w_0(0)$ for all $x\in\mathrm{cl}\Omega^o$. By defining   $c\equiv w_0(0)$ the validity of statement (ii) follows. \end{proof}

\section{Acknowledgments}

The authors wish to thank their teacher Prof.~M.~Lanza de Cristoforis for his precious help.

\end{document}